\numberwithin{equation}{section}
\theoremstyle{plain}
\newtheorem{Th}{Theorem}
\newtheorem{Lemma}[Th]{Lemma}
\newtheorem{Cor}[Th]{Corollary}
\theoremstyle{definition}
\newtheorem{?}[Th]{Problem}
\begin{document}


\pagestyle{fancy}
\lhead{}
\chead{}
\rhead{\thepage}
\cfoot{}
\lfoot{}
\rfoot{}
\renewcommand{\headrule}{}

\title{Self-reciprocal functions and double Mordell integrals} 

\author{Martin Nicholson} 

\begin{abstract}  
The theory of self-reciprocal functions is applied to the study Mordell type integrals. We find two particular eigenfunctions of the double cosine Fourier transform and then use them to evaluate certain one- and two-dimensional Mordell type integrals in closed form. A reduction formula is given for a certain family of double Mordell integrals in terms of one-dimensional integrals.
\end{abstract}


\clearpage\maketitle
\thispagestyle{empty}
\vspace{-20pt}
\section{Introduction}

Mordell integrals are integrals of the form
\begin{equation}\label{mordell_int}
    \phi_\alpha(\theta)=\int\limits_0^\infty\frac{\cos(\pi \theta x)}{\cosh(\pi x)}\,e^{-\pi \alpha x^2}dx,\quad \psi_\alpha(\theta)=\int\limits_0^\infty\frac{\sin(\pi \theta x)}{\sinh(\pi x)}\,e^{-\pi \alpha x^2}dx.
\end{equation}
There is growing interest in multidimensional Mordell integrals in the literature. For example, multivariable Mordell integrals have been studied recently in connection with supersymmetric $U(N)$ Chern-Simons theories in quantum field theory \cite{rst}. Double Mordell integrals have been studied in connection with higher depth quantum modular forms and multiple Eichler integrals in \cite{bkm},\cite{bkm2}.

This paper is a continuation of the analysis that has been started in \cite{nicholson} where we have studied two-dimensional Mordell integrals using elementary methods, in particular the theory of self-reciprocal functions \cite{titchmarsh}. In particular, we have considered the double Mordell integral
\[
\Phi(\alpha,\beta,\gamma)=\int\limits_0^\infty \int\limits_0^\infty\frac{\cos(\pi \gamma xy)}{\cosh(\pi x)\cosh(\pi y)}\,e^{-\pi (\alpha x^2+\beta y^2)/2}\,dxdy,
\]
and proved the reduction formula in terms of the functions $\phi_\alpha(\theta)$
\[
(2n+1)\sqrt{{2}/{\alpha}}\,\Phi\left(\alpha^{-1}, (4n+2)^2\alpha,4n+2\right)=\left\{\phi_{\alpha}\left(\tfrac{i}{2}\right)\right\}^2+2\sum_{k=1}^n(-1)^{k}\phi_{\alpha}\Big(\tfrac{2n+2k+1}{4n+2}i\Big)\phi_{\alpha}\Big(\tfrac{2n-2k+1}{4n+2}i\Big),
\]
where $n\in\mathbb{N}_0$. Apriory, it is not obvious that such reduction formulas exist. One of the aims of this paper is to study double Mordell integrals reducible in terms of the functions $\psi_\alpha(\theta)$.

Among other formulas proved in \cite{nicholson} was the curios closed form evaluation of the integral
\begin{equation}\label{closed_form}
    \int\limits_0^\infty\tanh {(\pi x)}\tanh{(\alpha x)}\cos{(2\alpha x^2)}\,dx=0,\qquad \alpha>0.
\end{equation}
The fact that this integral converges can be seen by comparing it to the Fresnel integrals
\begin{equation}\label{fresnel}
    \int\limits_0^\infty\cos{(\alpha x^2)}\,dx=\int\limits_0^\infty\sin{(\alpha x^2)}\,dx=\sqrt{\frac{\pi}{8\alpha}},\qquad \alpha>0,
\end{equation}
using the asymptotics
\[
\tanh {(\pi x)}\tanh{(\alpha x)}=1+O(e^{-cx}), \quad c>0,\, x\to +\infty.
\]
Closed form evaluation of integrals that contain trigonometric functions of the argument $\alpha x^2$ and hyperbolic functions of both of the arguments $\pi x$ and $\alpha x$ have been known, for example \cite{berndt},\cite{ramanujan1}
\begin{equation}\label{closed_form3}
    \int\limits_0^\infty\frac{\cosh (\alpha x)}{\cosh (\pi x)}\cos{(\alpha x^2)}\,dx=\frac{1}{2}\cos\frac{\alpha}{4}, \qquad -\pi<\alpha<\pi.
\end{equation}
Generalization of this integral with interesting applications was given in \cite{glasser},\cite{glasser2},\cite{gm}
\begin{equation}\label{glasser}
    \int\limits_0^\infty\frac{ \cosh (\pi  x) \cosh (\alpha x)}{\cosh (2 \pi  x)+\cosh (2b)}\cos(\alpha x^2)\,dx=\frac{\cos\left(\frac{\alpha}{4}+\frac{\alpha b^2}{4\pi^2}\right)}{4 \cosh (b)},\qquad \alpha>0.
\end{equation}
What makes these formulas interesting is the fact that similar looking integrals do not always have closed form for all $\alpha>0$. This can be demonstrated by the integral \cite{GR},\cite{ramanujan2}
\begin{align}\label{series}
    \nonumber\int\limits_0^\infty\frac{e^{i\alpha x^2}}{\cosh(\pi x)}\cos(bx)\,dx=&\sum_{k=0}^\infty(-1)^ke^{-b\left(k+\frac12\right)-i\alpha\left(k+\frac12\right)^2}\\&+\sqrt{\frac{\pi}{\alpha}}\sum_{k=0}^\infty(-1)^ke^{-\frac{\pi b}{\alpha}\left(k+\frac12\right)+\frac{i\pi }{4}-\frac{ib^2}{4\alpha}+\frac{i\pi^2}{\alpha}\left(k+\frac12\right)^2},\quad \alpha>0,\, b>0.
\end{align}
One can notice that when $\alpha/\pi\in\mathbb{Q}$, the two series can be expressed in terms of finite sums, for example
\[
\int\limits_0^\infty\frac{e^{i\pi x^2}}{\cosh(\pi x)}\cos(bx)\,dx=\frac{e^{-\frac{\pi i}{4}}+ie^{-\frac{ib^2}{4\pi}}}{2\cosh\frac{b}{2}}.
\]
However, no apparent closed form exists for general $\alpha$.

One can notice that the poles of the integrand in formulas \ref{closed_form3}, \ref{glasser} form an arithmetic progression with the common difference $i$. However, unless $\alpha$ is a rational multiple of $\pi$, the integrand in \ref{closed_form} has two sets of poles that form arithmetic progressions with incommensurate common differences $i$ and $i\pi/\alpha$. For simplicity, in this paper, the integrals of the first type (equations \ref{closed_form3}, \ref{glasser}, \ref{series}) will be called type I, and of the second type with two incommensurate sets of poles (equation \ref{closed_form} and others to be considered in section \ref{type2}) will be called type II.

The fact that not all integrals of type II have closed form is demonstrated by the transformation formula \cite{nicholson}
\vspace{-5pt}
\begin{equation}\label{r2}
   \sqrt{2}\int\limits_0^\infty \frac{\cos (\alpha x^2)}{\cosh (\pi  x) \cosh (\alpha x)}\,dx=\int\limits_0^\infty  \frac{\cosh (\frac{\pi  x}{2})}{\cosh (\pi  x )}\cdot\frac{ \cosh (\frac{\alpha x}{2})}{\cosh (\alpha x)}\, dx, \qquad \alpha>0.
\end{equation}
It is evident that the right hand side of \ref{r2} can not have a closed form unless $\alpha/\pi\in\mathbb{Q}$. It is worth mentioning here that the function $\frac{\cosh (\frac{\pi  x}{2})}{\cosh (\pi  x )}$ is an eigenfunction of the cosine Fourier transform (up to rescaling). By applying Plancherel type argument to two different eigenfunctions of the cosine Fourier transform, Ramanujan derived transformation formulas for integrals of products of self-reciprocal hyperbolic functions (e.g., equation $10$ in \cite{ramanujan1}). 

Sometimes a type II integral can be expressed in terms of integrals of type I
\begin{equation}
    \int\limits_0^\infty\frac{\sin(\alpha x^2)}{\sinh(\pi x)\sinh(\alpha x)}\cos(bx)\,dx=\Bigg|\int\limits_0^\infty\frac{e^{i\alpha x^2}}{\cosh(\pi x)}\cos(bx)\,dx\Bigg|^2, \qquad \alpha>0, b>0.
\end{equation}
Although in \cite{nicholson}, this identity was proved for $b=0$, the proof easily can be extended to the case $b> 0$.

The organization of the paper is as follows. Following the same logic that have been used in \cite{nicholson}, we evaluate double Fourier transforms of certain functions of two variables in section \ref{auxiliary}. We find that the result of these Fourier transforms is the same function taken with minus sign, plus two terms with Dirac delta functions (that is, the functions considered are particular eigenfunctions of the double Fourier transform). The main difficulty of the analysis presented in this paper (sections \ref{auxiliary} and \ref{reduction}) will be the derivation of these Dirac delta function terms, which will be circumvented by proper regularizations of the singular integrals involved. Integrals with such delta function terms have not been considered in \cite{nicholson}. In section \ref{type2} we use these formulas to calculate two type II integrals in closed form. Two more type II integrals will follow by taking linear combinations of the first two. In section \ref{mordell}, several double Mordell integrals will be evaluated in close form. Reduction of a certain family of double Mordell integrals will be studied in section \ref{reduction}, similar to the reduction formula for $\Phi(\alpha,\beta,\gamma)$ mentioned above. Theorem of section \ref{reduction} generalizes double Mordell integral evaluations of section \ref{mordell}. In the Appendix, we give a new proof of the formula \ref{series} using Poisson summation formula.

\section{Two auxiliary integrals}\label{auxiliary}

Sokhotski--Plemelj formula \cite{m} states that
\[
\lim_{\varepsilon\to +0}\frac{1}{x\pm i\varepsilon}=\mathcal{P}\left(\frac{1}{x}\right)\mp i\pi\delta(x),
\]
where $\mathcal{P}$ denotes the Cauchy principal value, and $\delta$ is the Dirac delta function. Below we will use the following consequence of this formula
\begin{equation}\label{sp}
\lim_{\varepsilon\to +0}\frac{1}{\sinh(x\pm i\varepsilon)}=\mathcal{P}\left(\frac{1}{\sinh(x)}\right)\mp i\pi\delta(x),
\end{equation}
which can be checked using partial fractions expansions.

\begin{Lemma}\label{lemma1} For $a,b\in\mathbb{R}$
\begin{equation}\label{integral1}
    \frac{2}{\pi}\int\limits_0^\infty\int\limits_0^\infty\frac{\sin (xy)}{\tanh ( x) \tanh ( \pi y)}\cos (ax)\cos (by)\, dx dy=-\frac{\sin (ab)}{\tanh (\pi a) \tanh (b)}+\delta(a)+\pi\delta(b).
\end{equation}
\end{Lemma}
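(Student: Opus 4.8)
The plan is to read the left-hand side as the double cosine transform of $F(x,y)=\frac{\sin(xy)}{\tanh(x)\tanh(\pi y)}$ and to carry out the two one-dimensional transforms in turn, the genuine difficulty being the bookkeeping of the Dirac masses. First I would record the self-reciprocity of $\coth$ under the sine transform: expanding $\coth(x)=1+2\sum_{n\ge 1}e^{-2nx}$, integrating term by term, and summing the resulting series with the partial-fraction expansion of $\coth$, one gets
\begin{equation*}
\int\limits_0^\infty\frac{\sin(cx)}{\tanh(x)}\,dx=\frac{\pi}{2}\coth\Big(\frac{\pi c}{2}\Big),
\end{equation*}
understood as a principal value in $c$ (the constant term ``$1$'' supplies exactly the $\mathcal{P}(1/c)$). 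Writing $\cos(ax)\sin(xy)=\tfrac12\big[\sin((y+a)x)+\sin((y-a)x)\big]$ and using $\coth u+\coth v=\sinh(u+v)/(\sinh u\,\sinh v)$ together with $\sinh u\,\sinh v=\tfrac12(\cosh(u+v)-\cosh(u-v))$, the inner $x$-integral collapses to
\begin{equation*}
\int\limits_0^\infty\frac{\sin(xy)\cos(ax)}{\tanh(x)}\,dx=\frac{\pi}{2}\,\frac{\sinh(\pi y)}{\cosh(\pi y)-\cosh(\pi a)},
\end{equation*}
with a principal value at $y=|a|$.

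Next I would substitute this into the $y$-integral. Since $\sinh(\pi y)/\tanh(\pi y)=\cosh(\pi y)$ and $\frac{\cosh(\pi y)}{\cosh(\pi y)-\cosh(\pi a)}=1+\frac{\cosh(\pi a)}{\cosh(\pi y)-\cosh(\pi a)}$, the left-hand side becomes $\int_0^\infty\cos(by)\,dy$ plus $\cosh(\pi a)$ times a principal-value integral. The first piece is $\pi\delta(b)$. For the second I would close the contour and sum the residues at $y=\pm a+2in$ (the real poles $y=\pm a$ entering through \eqref{sp} as half-residues); the geometric series in $n$ sums to $\coth(b)$, and after multiplying by $\cosh(\pi a)$ one obtains the smooth term $-\frac{\sin(ab)}{\tanh(\pi a)\tanh(b)}$.

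The main obstacle is the remaining Dirac mass $\delta(a)$, which this iterated calculation does not see. Indeed, carrying out the two integrations in the opposite order---first in $y$, which by the same computation with the roles of $(x,a)$ and $(y,b)$ interchanged (and the rescaling $\pi y\mapsto x$) produces $\tfrac12\frac{\sinh(x)}{\cosh(x)-\cosh(b)}$---yields instead $\delta(a)$ together with the same smooth term, and no $\pi\delta(b)$. Thus Fubini genuinely fails: each order of integration captures only the one delta coming from its final $\int_0^\infty\cos(\cdot)\,d(\cdot)=\pi\delta(\cdot)$ step. The honest object is the doubly regularized integral, and the crux of the proof is to set up a regularization symmetric in the two variables---e.g. inserting $i\varepsilon$ in both $1/\sinh$ factors and appealing to \eqref{sp}, or a Gaussian cutoff $e^{-\varepsilon(x^2+y^2)}$---and to show that letting $\varepsilon\to+0$ produces both delta terms with the stated coefficients $1$ and $\pi$, while leaving the smooth part unchanged. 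Controlling this limit, and in particular verifying that no further masses are generated by the collision of the pole $y=|a|$ with the zero of $\tanh(\pi y)$ at the origin, is where the real work lies.
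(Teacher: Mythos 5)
Your proposal correctly reproduces the smooth term and diagnoses the central difficulty -- the iterated computation in either order produces only one of the two Dirac masses, so a genuinely two-variable regularization is needed -- but it stops exactly there. The final paragraph defers the production of both deltas ("is where the real work lies") rather than carrying it out, and that deferred step \emph{is} the proof: without it you have only verified the lemma modulo the $\delta(a)$ term, which is precisely the part that cannot be obtained from either iterated integral. Moreover, the specific fixes you float are not obviously workable as stated: inserting $i\varepsilon$ into both $1/\sinh$ factors does nothing to cure the divergence at infinity, since $\coth(x)\coth(\pi y)\to 1$ and $\sin(xy)\cos(ax)\cos(by)$ merely oscillates, so the double integral is still not absolutely convergent and the order-of-integration ambiguity -- the very problem you identified -- persists; a Gaussian cutoff $e^{-\varepsilon(x^2+y^2)}$ would restore symmetry but leaves you with integrals you cannot evaluate in closed form, and you give no route to controlling the limit.

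The paper resolves this with a different, more effective regularization: replace $\coth(x)$ by $\cosh((1-\varepsilon)x)/\sinh(x)$ and $\coth(\pi y)$ by $\cosh(\pi(1-\omega)y)/\sinh(\pi y)$, $0<\varepsilon,\omega<1$. These factors decay like $e^{-\varepsilon x}$ and $e^{-\pi\omega y}$, so the double integral becomes honestly convergent and symmetric in its treatment of the two variables, while remaining explicitly computable: the $y$-integral is evaluated by \eqref{gradsteyn}, the resulting $x$-integral is extended to $\mathbb{R}$ and computed by residues on a rectangle of height $2\pi$, and the outcome is a closed form whose singular structure sits in the factors $1/\sinh(\pi a+i\pi\varepsilon)$ and $1/\sinh(b\pm i\pi\omega)$. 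The joint limit $\varepsilon,\omega\to+0$ is then read off from the Sokhotski--Plemelj formula \eqref{sp}, which delivers the principal-value (smooth) part \emph{and} both masses $\delta(a)$ and $\pi\delta(b)$ with the stated coefficients in a single step. In short: your analysis identifies what must be done, but the lemma's content is the execution -- choosing a damping that keeps the integral exactly solvable and extracting the deltas from the regularized closed form -- and that is missing from the proposal.
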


\begin{proof} Let
\[
I_{\varepsilon,\,\omega}(a,b)=\frac{2}{\pi}\int\limits_0^\infty\int\limits_0^\infty\frac{\cosh ((1-\varepsilon) x) \cosh ( \pi(1-\omega) y)}{\sinh (x) \sinh ( \pi y)}\sin (xy)\cos (ax)\cos (by)\, dx dy,
\]
where $0<\varepsilon<1$, $0<\omega<1$. The integral in the lemma is clearly divergent. It will be regularized as
\[
\lim_{\substack{\varepsilon\to +0\\ \omega\to +0}}I_{\varepsilon,\,\omega}(a,b).
\]
Writing $\sin(xy)\cos(by)=\frac{1}{2}\sin(y(x+b))+\frac{1}{2}\sin(y(x-b))$ and calculating the integral over $y$ using the formula $3.981.8$ from \cite{GR}
\begin{equation}\label{gradsteyn}
    \int\limits_0^\infty\frac{\cosh(\theta y)}{\sinh(\pi y)}\sin(a y)\, dy=\frac{1}{2}\cdot\frac{\sinh (a)}{\cosh(a)+\cos(\theta)}, \qquad 0<\theta<\pi,\, a>0,
\end{equation}
yields
\begin{align*}
    I_{\varepsilon,\,\omega}(a,b)&=\frac{1}{2\pi}\int\limits_0^\infty\frac{\cosh ((1-\varepsilon) x)}{\sinh (x)}\left(\frac{\sinh (x+b)}{\cosh (x+b)-\cos (\pi  \omega )}+\frac{\sinh (x-b)}{\cosh (x-b)-\cos (\pi  \omega )}\right)\cos (ax)\, dx\\
    &=\frac{1}{\pi}\int\limits_0^\infty\frac{\left(\cosh (x)-\cosh (b) \cos (\pi  \omega )\right)\cosh ((1-\varepsilon) x)}{(\cosh (x+b)-\cos (\pi  \omega )) (\cosh (x-b)-\cos (\pi  \omega ))}\cos (ax)\, dx\\
    &=\operatorname{Re}\left\{\frac{1}{2\pi}\int\limits_{-\infty}^\infty\frac{\cosh (x)-\cosh (b) \cos (\pi  \omega )}{(\cosh (x+b)-\cos (\pi  \omega )) (\cosh (x-b)-\cos (\pi  \omega ))}\,e^{(1-\varepsilon)x+iax} dx\right\}.
\end{align*}
Next, apply contour integration along a rectangular contour with vertices $(-R,0)$, $(R,0)$, $(R,2\pi i)$, $(-R,2\pi i)$, where $R\to\infty$. The integral over the line $\operatorname{Im}\, z=2\pi$ will be equal to the integral over the real axis times $-e^{2(1-\varepsilon)\pi i-2\pi a}$. After tedious but quite straightforward calculation using residue theorem one obtains
\[
I_{\varepsilon,\,\omega}(a,b)=\operatorname{Im}\left\{\frac{\sinh (\pi  a+i \pi  \epsilon -(i a-\epsilon +1) (b-i \pi  \omega ))}{2 \sinh (\pi  a+i \pi  \epsilon )\sinh (b-\pi  i \omega )}-\frac{\sinh (\pi  a+i \pi  \epsilon +(i a-\epsilon +1) (b+i \pi  \omega ))}{2 \sinh (\pi  a+i \pi  \epsilon )\sinh (b+i \pi  \omega )}\right\}.
\]
Hence
\[
\lim_{\substack{\varepsilon\to +0\\ \omega\to +0}}I_{\varepsilon,\,\omega}(a,b)=-\frac{\sin(ab)}{\tanh(\pi a)\tanh(b)}+\delta(a)+{\pi}\delta(b),
\]
from which the claim follows. \end{proof}

\begin{Lemma}\label{lemma2} For $a,b\in\mathbb{R}$
\[
\frac{2}{\pi}\int\limits_0^\infty\int\limits_0^\infty\frac{\sin (2xy)}{\tanh ( x) \tanh ( \pi y)}\cos (ax)\cos (by)\, dx dy=-\frac{\sin\frac{ab}{2}}{2\tanh\frac{\pi a}{2} \tanh \frac{b}{2}}+\delta(a)+\pi\delta(b).
\]
\end{Lemma}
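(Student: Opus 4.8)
The plan is to follow the proof of Lemma \ref{lemma1} almost verbatim, the only change being the extra factor $2$ in $\sin(2xy)$. First I would regularize the (divergent) integral as $\lim_{\varepsilon,\omega\to+0}J_{\varepsilon,\omega}(a,b)$, where
\[
J_{\varepsilon,\omega}(a,b)=\frac{2}{\pi}\int_0^\infty\int_0^\infty\frac{\cosh((1-\varepsilon)x)\cosh(\pi(1-\omega)y)}{\sinh(x)\sinh(\pi y)}\sin(2xy)\cos(ax)\cos(by)\,dx\,dy.
\]
Writing $\sin(2xy)\cos(by)=\tfrac12\sin((2x+b)y)+\tfrac12\sin((2x-b)y)$ and integrating over $y$ by \eqref{gradsteyn} (with $\theta=\pi(1-\omega)$, so $\cos\theta=-\cos\pi\omega$) produces a single $x$-integral whose only structural difference from Lemma \ref{lemma1} is that the arguments $x\pm b$ are replaced everywhere by $2x\pm b$.

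The next step is to combine the two resulting fractions. Here the identity $\sinh(2x)=2\sinh(x)\cosh(x)$ is the key simplification: the combined numerator factors as $2\sinh(2x)\,[\cosh(2x)-\cosh(b)\cos\pi\omega]$, so the factor $1/\sinh(x)$ cancels and leaves an integrand even in $x$,
\[
J_{\varepsilon,\omega}(a,b)=\frac{2}{\pi}\int_0^\infty\frac{\cosh(x)\,[\cosh(2x)-\cosh(b)\cos\pi\omega]\,\cosh((1-\varepsilon)x)}{[\cosh(2x+b)-\cos\pi\omega]\,[\cosh(2x-b)-\cos\pi\omega]}\cos(ax)\,dx.
\]
As in Lemma \ref{lemma1} I would rewrite this as $\operatorname{Re}\{\tfrac1\pi\int_{-\infty}^\infty g(z)\,e^{(1-\varepsilon)z+iaz}\,dz\}$, where $g$ denotes the even, $2\pi i$-periodic rational-hyperbolic factor, and integrate over the rectangle with vertices $(\pm R,0),(\pm R,2\pi i)$. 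The top edge again contributes $-e^{2\pi i(1-\varepsilon)-2\pi a}$ times the real-axis integral and the sides vanish, so the value is fixed by the residues inside the strip $0<\operatorname{Im}z<2\pi$.

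The hard part will be precisely this residue computation. Because the denominator now involves $\cosh(2z\pm b)$, the poles recur with period $\pi i$ in $z$, so the strip contains eight simple poles $z=\pm\tfrac b2\pm\tfrac{i\pi\omega}2+\pi i k$ rather than the four of Lemma \ref{lemma1}; summing their residues and collapsing the result into a compact closed form (the analogue of the two-term $\operatorname{Im}\{\cdots\}$ expression there) is the bulk of the work. Once $J_{\varepsilon,\omega}$ is in closed form, I expect the limit $\varepsilon,\omega\to+0$ to proceed exactly as before: the individually divergent $1/\sin\pi\omega$ contributions of the paired poles must recombine into clean $1/\sinh$ factors, and applying Sokhotski--Plemelj \eqref{sp} to these converts their residual singularities into the Dirac terms $\delta(a)+\pi\delta(b)$ while the principal values assemble into $-\tfrac{\sin(ab/2)}{2\tanh(\pi a/2)\tanh(b/2)}$.

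As a guide and independent check for how the halved arguments and the delta coefficients arise, I would keep in mind the reduction to Lemma \ref{lemma1}: substituting $x=u/2$ and using $\coth(u/2)=\coth(u)+1/\sinh(u)$ splits the integral into $\tfrac12$ of Lemma \ref{lemma1} at $(a/2,b)$, which contributes $-\tfrac{\sin(ab/2)}{2\tanh(\pi a/2)\tanh(b)}+\delta(a)+\tfrac\pi2\delta(b)$, plus the residual integral $\tfrac1\pi\iint\frac{\sin(uy)}{\sinh(u)\tanh(\pi y)}\cos(\tfrac{au}2)\cos(by)\,du\,dy$. Evaluating the latter (integrate out $u$ via $\int_0^\infty\frac{\sin(\beta u)}{\sinh u}\,du=\tfrac\pi2\tanh\tfrac{\pi\beta}2$, the rescaled $\theta\to0$ case of \eqref{gradsteyn}, then handle the non-decaying tail that supplies $\tfrac\pi2\delta(b)$) gives $-\tfrac{\sin(ab/2)}{2\tanh(\pi a/2)\sinh(b)}+\tfrac\pi2\delta(b)$; adding the two pieces and using $\coth(b/2)-\coth(b)=1/\sinh(b)$ reproduces the $\tanh(b/2)$ and the full $\pi\delta(b)$ of the statement, confirming the target before the longer direct computation is carried out.
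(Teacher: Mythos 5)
Your proposal follows the paper's own proof essentially verbatim: the same $\cosh((1-\varepsilon)x)\cosh(\pi(1-\omega)y)$ regularization, the same $y$-integration via \eqref{gradsteyn}, the same algebraic collapse (your intermediate one-dimensional integral is exactly the paper's displayed formula, with $\sinh(2x)=2\sinh(x)\cosh(x)$ doing the cancellation), and the same rectangular contour with the limit $\varepsilon,\omega\to+0$ producing the principal-value term and the Dirac terms via \eqref{sp}; your count of eight poles in the strip is correct, and the paper likewise leaves that residue computation unspelled. Your closing consistency check---splitting via $\coth(x)=\coth(2x)+1/\sinh(2x)$ to reduce to Lemma \ref{lemma1} plus a $1/\sinh$ integral, with the correct rescaling $\delta(a/2)=2\delta(a)$---is a sound addition and is in the spirit of the paper's own remark that the closed form can alternatively be obtained from ``the formula from the proof of the previous lemma.''
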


\begin{proof} Let
\[
I_{\varepsilon,\,\omega}(a,b)=\frac{2}{\pi}\int\limits_0^\infty\int\limits_0^\infty\frac{\cosh ((1-\varepsilon) x) \cosh ( \pi(1-\omega) y)}{\sinh (x) \sinh ( \pi y)}\sin (2xy)\cos (ax)\cos (by)\, dx dy,
\]
where $0<\varepsilon<1$, $0<\omega<1$. The integral in the lemma is
\[
\lim_{\substack{\varepsilon\to +0\\ \omega\to +0}}I_{\varepsilon,\,\omega}(a,b).
\]
After simple calculation
\begin{align*}
    I_{\varepsilon,\,\omega}(a,b)&=\operatorname{Re}\left\{\frac{1}{\pi}\int\limits_{-\infty}^\infty\frac{\left(\cosh (2x)-\cosh (b) \cos (\pi  \omega )\right)\cosh(x)}{(\cosh (2x+b)-\cos (\pi  \omega )) (\cosh (2x-b)-\cos (\pi  \omega ))}\,e^{(1-\varepsilon)x+iax} dx\right\}.
\end{align*}
Using contour integration, or the formula from the proof of the previous lemma one finds
\[
I_{\varepsilon,\,\omega}(a,b)=\operatorname{Im}\left\{\frac{\sinh\frac{\pi  a+i \pi  \epsilon -(i a-\epsilon +1) (b-i \pi  \omega )}{2}}{4\sinh\frac{\pi  a+i \pi  \epsilon}{2}\sinh\frac{b-\pi  i \omega}{2}}-\frac{\sinh\frac{\pi  a+i \pi  \epsilon +(i a-\epsilon +1) (b+i \pi  \omega )}{2}}{4\sinh\frac{\pi  a+i \pi  \epsilon}{2}\sinh\frac{b+i \pi  \omega}{2}}\right\}.
\]
Hence
\[
\lim_{\substack{\varepsilon\to +0\\ \omega\to +0}}I_{\varepsilon,\,\omega}(a,b)=-\frac{\sin\frac{ab}{2}}{2\tanh\frac{\pi a}{2}\tanh\frac{b}{2}}+\delta(a)+{\pi}\delta(b),
\]
as required. \end{proof}

\section{Type II integrals}\label{type2}

\begin{Th}\label{int1} For $\alpha>0$
\[
\int\limits_0^\infty\frac{\sin(\alpha x^2)\,dx}{\tanh(\pi x)\tanh(\alpha x)}=\frac{1}{4}+\frac{\pi}{4\alpha}.
\]
\end{Th}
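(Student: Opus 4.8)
The plan is to apply Lemma \ref{lemma1} along the ray $b=\alpha a$ in the $(a,b)$-plane, which is exactly the diagonal that turns the right-hand side of \ref{integral1} into the integrand of the target. Writing the identity of Lemma \ref{lemma1} as $F(a,b)=G(a,b)$, where $F$ is the double cosine transform on the left and
\[
G(a,b)=-\frac{\sin(ab)}{\tanh(\pi a)\tanh(b)}+\delta(a)+\pi\delta(b),
\]
I would integrate both sides against $\delta(b-\alpha a)$ over the first quadrant, i.e. compute $\int_0^\infty F(a,\alpha a)\,da=\int_0^\infty G(a,\alpha a)\,da$. The decisive point is that $b=\alpha a$ sends $\sin(ab)\mapsto\sin(\alpha a^2)$, $\tanh(\pi a)\mapsto\tanh(\pi a)$ and $\tanh(b)\mapsto\tanh(\alpha a)$, so the main term of $G$ reproduces precisely $-T$, where $T$ denotes the integral to be evaluated. (Convergence of $T$ is as noted in the introduction: the integrand tends to $1/\pi$ at the origin and behaves like the Fresnel-convergent $\sin(\alpha x^2)$ at infinity.)

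On the left-hand side, interchanging the $a$-integration with the $(x,y)$-integration and using $\int_0^\infty\cos(ax)\cos(\alpha a y)\,da=\tfrac{\pi}{2}\left[\delta(x-\alpha y)+\delta(x+\alpha y)\right]$, only the term $\delta(x-\alpha y)$ has support for $x,y>0$; the double integral then collapses onto the diagonal $x=\alpha y$ and yields
\[
\int_0^\infty \frac{\sin(\alpha y^2)}{\tanh(\alpha y)\tanh(\pi y)}\,dy=T,
\]
so the left-hand side equals $+T$. For the right-hand side the two delta terms sit at the left endpoint $a=0$, and under the natural half-line convention $\int_0^\infty\delta(a)\,da=\tfrac12$ they contribute $\tfrac12$ and $\pi\int_0^\infty\delta(\alpha a)\,da=\tfrac{\pi}{2\alpha}$ respectively. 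Collecting everything, the identity becomes $T=-T+\tfrac12+\tfrac{\pi}{2\alpha}$, whence $T=\tfrac14+\tfrac{\pi}{4\alpha}$, as claimed.

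The main obstacle is rigor in the distributional steps rather than the algebra, which is self-contained and produces the stated constants automatically. Three points need care: the interchange of the $a$-integration with the $(x,y)$-integration on the left, the use of the singular formula for $\int_0^\infty\cos(ax)\cos(\alpha a y)\,da$, and the evaluation of the endpoint deltas on the right. I would handle all three by the regularization philosophy of Lemma \ref{lemma1}: replace $\delta(b-\alpha a)$ by a Gaussian approximation $\sqrt{\lambda/\pi}\,e^{-\lambda(b-\alpha a)^2}$, keep the parameters $\varepsilon,\omega$ of Lemma \ref{lemma1} positive throughout so that all integrals are absolutely convergent and may be freely interchanged, and only then pass to the limits $\varepsilon,\omega\to+0$ and $\lambda\to\infty$. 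Verifying that the endpoint delta contributes exactly half of its mass under this limiting procedure is the one delicate check, and it is precisely what fixes the value $\tfrac14+\tfrac{\pi}{4\alpha}$ rather than twice that.
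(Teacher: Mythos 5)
Your proposal is correct and follows essentially the same route as the paper: substituting $b=\alpha a$ in Lemma \ref{lemma1}, integrating over $a\in(0,\infty)$, collapsing the left side onto the diagonal via $\tfrac{2}{\pi}\int_0^\infty\cos(ax)\cos(\alpha ay)\,da=\delta(x-\alpha y)+\delta(x+\alpha y)$, and evaluating the endpoint deltas with $\int_0^\infty\delta(a)\,da=\tfrac12$, yielding $T=-T+\tfrac12+\tfrac{\pi}{2\alpha}$. Your added remarks on regularizing the distributional steps only make explicit what the paper leaves implicit; the computation and constants agree exactly.
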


\begin{proof} In lemma \ref{lemma1}, put $b=\alpha a$ to obtain
\[
\frac{2}{\pi}\int\limits_0^\infty\int\limits_0^\infty\frac{\sin (xy)}{\tanh (x) \tanh (\pi y)}\cos (ax)\cos (\alpha a y)\, dx dy=-\frac{\sin (\alpha a^2)}{\tanh (\pi a) \tanh (\alpha a)}+\left(1+\frac{\pi}{\alpha}\right)\delta(a).
\]
Integrating with respect to $a$ from $0$ to $\infty$ using the formulas
\[
\int\limits_0^\infty \delta(a)\,da=\frac{1}{2},
\]
\[
\frac{2}{\pi}\int\limits_0^\infty \cos (ax)\cos (\alpha a y)\,da=\delta(x-\alpha y)+\delta(x+\alpha y),
\]
one finds
\begin{align*}
    \int\limits_0^\infty\int\limits_0^\infty\frac{\sin (xy)}{\tanh (x) \tanh ( \pi y)}&\left(\delta(x-\alpha y)+\delta(x+\alpha y)\right)\, dx dy\\
    &=-\int\limits_0^\infty\frac{\sin (\alpha a^2)}{\tanh (\pi a) \tanh (\alpha a)}\, da+\frac{1}{2}\left(1+\frac{\pi}{\alpha}\right).
\end{align*}
After calculating the integral over $x$ one finds the same integral both on the right and left hand side. Thus
\[
2\cdot\int\limits_0^\infty\frac{\sin (\alpha a^2)}{\tanh (\pi a) \tanh (\alpha a)}\, da=\frac{1}{2}\left(1+\frac{\pi }{\alpha}\right),
\]
as required.
\end{proof}

\begin{Th}\label{int2} For $\alpha>0$
\[
\int\limits_0^\infty\frac{\sin(2\alpha x^2)\,dx}{\tanh(\pi x)\tanh(\alpha x)}=\frac{1}{4}+\frac{\pi}{4\alpha}.
\]
\end{Th}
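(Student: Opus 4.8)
The plan is to mirror the proof of Theorem \ref{int1} verbatim, replacing Lemma \ref{lemma1} by Lemma \ref{lemma2}, and to absorb the one genuinely new feature by a rescaling at the very end. First I would specialize Lemma \ref{lemma2} by setting $b=\alpha a$. Using the scaling $\pi\delta(\alpha a)=\tfrac{\pi}{\alpha}\delta(a)$ valid for $\alpha>0$, this produces
\[
\frac{2}{\pi}\int\limits_0^\infty\int\limits_0^\infty\frac{\sin (2xy)}{\tanh (x) \tanh ( \pi y)}\cos (ax)\cos (\alpha a y)\, dx dy=-\frac{\sin\frac{\alpha a^2}{2}}{2\tanh\frac{\pi a}{2}\tanh\frac{\alpha a}{2}}+\left(1+\frac{\pi}{\alpha}\right)\delta(a).
\]

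Next I would integrate both sides in $a$ over $(0,\infty)$, exactly as in Theorem \ref{int1}, invoking $\int_0^\infty\delta(a)\,da=\tfrac12$ together with $\frac{2}{\pi}\int_0^\infty\cos(ax)\cos(\alpha a y)\,da=\delta(x-\alpha y)+\delta(x+\alpha y)$. On the left-hand side, interchanging the order of integration lets the $a$-integration collapse the pair of delta functions, and the $x$-integration then forces $x=\alpha y$ (the term $\delta(x+\alpha y)$ contributes nothing on the open first quadrant). This yields precisely the target integral
\[
J:=\int\limits_0^\infty\frac{\sin(2\alpha y^2)}{\tanh(\pi y)\tanh(\alpha y)}\,dy.
\]
The right-hand side becomes $-\int_0^\infty\frac{\sin\frac{\alpha a^2}{2}}{2\tanh\frac{\pi a}{2}\tanh\frac{\alpha a}{2}}\,da+\tfrac12\bigl(1+\tfrac{\pi}{\alpha}\bigr)$.

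The single step that differs from Theorem \ref{int1} — and the only place needing care — is matching the surviving right-hand integral back to $J$. In Theorem \ref{int1} the two corresponding integrals were literally identical, whereas here the arguments are halved. I would remove this discrepancy by the substitution $a=2u$, under which $\frac{\alpha a^2}{2}\mapsto 2\alpha u^2$, $\frac{\pi a}{2}\mapsto\pi u$, $\frac{\alpha a}{2}\mapsto\alpha u$, and $da=2\,du$; the Jacobian cancels the prefactor $\tfrac12$, so the integral becomes exactly $J$. Consequently $J=-J+\tfrac12\bigl(1+\tfrac{\pi}{\alpha}\bigr)$, i.e. $2J=\tfrac12\bigl(1+\tfrac{\pi}{\alpha}\bigr)$, giving $J=\tfrac14+\tfrac{\pi}{4\alpha}$ as claimed. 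I expect the main obstacle to be purely bookkeeping: keeping the delta-function rescaling and the factor $2$ from $a=2u$ aligned so that the two occurrences of $J$ carry matching coefficients. Convergence of $J$ itself needs no separate argument, being guaranteed by the same Fresnel-integral comparison used for \ref{closed_form}.
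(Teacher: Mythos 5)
Your proposal is correct and is precisely the argument the paper intends by its one-line proof ``follows from Lemma \ref{lemma2} along the same lines as in the proof of the previous theorem'': specialize Lemma \ref{lemma2} at $b=\alpha a$, integrate in $a$ using $\int_0^\infty\delta(a)\,da=\tfrac12$ and the cosine-orthogonality relation, and identify the two integrals. The only detail the paper glosses over --- that the surviving right-hand integral has halved arguments and must be rescaled via $a=2u$ (with the Jacobian cancelling the factor $\tfrac12$) before it matches $J$ --- is exactly the step you isolate and carry out correctly.
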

\begin{proof}
Proof follows from lemma \ref{lemma2} along the same lines as in the proof of the previous theorem.
\end{proof}

\begin{Cor}\label{int3} For $\alpha>0$
\[
\int\limits_0^\infty\frac{\tanh(\alpha x)}{\tanh(\pi x)}\sin(2\alpha x^2)\,dx=\frac{1}{4}.
\]
\end{Cor}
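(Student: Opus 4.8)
The plan is to obtain this Corollary algebraically from Theorems \ref{int1} and \ref{int2}, without returning to the auxiliary Lemmas, by exploiting the hyperbolic double-angle identity. The key observation is that
\[
\frac{1}{\tanh(2\alpha x)}=\frac{1}{2}\left(\frac{1}{\tanh(\alpha x)}+\tanh(\alpha x)\right),
\]
which is just the rewriting of $\coth(2u)=\tfrac12(\coth u+\tanh u)$. Multiplying both sides by $\dfrac{\sin(2\alpha x^2)}{\tanh(\pi x)}$ and integrating over $x\in(0,\infty)$ splits the left-hand integral into exactly the two pieces I can control: one is the integral of Theorem \ref{int2}, and the other is the integral whose value I am after.

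First I would record the two inputs. Applying Theorem \ref{int1} with $\alpha$ replaced by $2\alpha$ gives
\[
\int_0^\infty\frac{\sin(2\alpha x^2)}{\tanh(\pi x)\tanh(2\alpha x)}\,dx=\frac14+\frac{\pi}{8\alpha},
\]
while Theorem \ref{int2} gives
\[
\int_0^\infty\frac{\sin(2\alpha x^2)}{\tanh(\pi x)\tanh(\alpha x)}\,dx=\frac14+\frac{\pi}{4\alpha}.
\]
Denoting the sought integral by $C$ and using the identity above, the three integrals are linked by
\[
\frac14+\frac{\pi}{8\alpha}=\frac12\left(\frac14+\frac{\pi}{4\alpha}\right)+\frac12\,C.
\]
The terms $\pi/(8\alpha)$ cancel, and solving for $C$ yields $C=\tfrac14$, which is the claim.

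The only point requiring care — and the main (mild) obstacle — is the legitimacy of the splitting, i.e. that each of the three integrals converges individually so that the linear relation among them is valid. This I would check via the asymptotics quoted in the introduction: near $x=0$ one has $1/\tanh(\pi x)\sim 1/(\pi x)$ and $\tanh(\alpha x)\sim\alpha x$, so each integrand is bounded at the origin after pairing with $\sin(2\alpha x^2)=O(x^2)$; near $x=+\infty$ the hyperbolic prefactors tend to $1$ up to exponentially small corrections, so convergence follows by comparison with the Fresnel integral \ref{fresnel}. With convergence of all three integrals in hand, the term-by-term integration used above is justified and the computation is complete.
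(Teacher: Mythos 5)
Your proof is correct and is essentially the paper's own argument: your identity $\coth(2u)=\tfrac12(\coth u+\tanh u)$ is just a rearrangement of the paper's elementary identity \ref{elementary}, $2\coth(2x)-\coth(x)=\tanh(x)$, and both proofs combine Theorem \ref{int1} (applied with $\alpha$ replaced by $2\alpha$) with Theorem \ref{int2} in the same linear way. The convergence check you add is a reasonable extra precaution that the paper omits.
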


\begin{proof}
Use the elementary identity
\begin{equation}\label{elementary}
    2\coth(2x)-\coth(x)=\tanh(x)
\end{equation}
and the previous two theorems.
\end{proof}

Note that when $\alpha=\pi$, the formula in Theorem \ref{int3} reduces to one of the Fresnel integrals \ref{fresnel}.

\begin{Cor} For $\alpha>0$
\[
\int\limits_0^\infty\frac{\sin(\alpha x^2)\,dx}{\sinh(\pi x)\tanh(\alpha x)}=\frac{1}{4}.
\]
\end{Cor}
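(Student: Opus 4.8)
The plan is to reduce the integral to the two theorems just proved by rewriting $1/\sinh(\pi x)$ through hyperbolic cotangents. The starting point is the elementary identity $\frac{1}{\sinh t}=\coth\frac{t}{2}-\coth t$ (proved exactly as \eqref{elementary}, by writing $\sinh t=2\sinh\frac t2\cosh\frac t2$ and using $2\cosh^2\frac t2=\cosh t+1$). Applied with $t=\pi x$ this gives
\[
\frac{1}{\sinh(\pi x)}=\frac{1}{\tanh(\pi x/2)}-\frac{1}{\tanh(\pi x)}.
\]
Inserting this into the integrand splits the target into $I_1-I_2$, where $I_1=\int_0^\infty\frac{\sin(\alpha x^2)}{\tanh(\pi x/2)\tanh(\alpha x)}\,dx$ and $I_2=\int_0^\infty\frac{\sin(\alpha x^2)}{\tanh(\pi x)\tanh(\alpha x)}\,dx$.

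Before splitting one should check that each piece converges in its own right, so that the separation is legitimate. Near $x=0$ one has $\tanh(\pi x/2)\sim\pi x/2$, $\tanh(\pi x)\sim\pi x$, $\tanh(\alpha x)\sim\alpha x$ and $\sin(\alpha x^2)\sim\alpha x^2$, so both integrands tend to a finite constant and there is no singularity at the origin; as $x\to\infty$ the hyperbolic factors tend to $1$, so each integrand behaves like $\sin(\alpha x^2)$ and both integrals converge as Fresnel-type oscillatory integrals, compare \eqref{fresnel}. This convergence check is the only point requiring a little care, and it is entirely routine.

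Now $I_2$ is exactly the integral of Theorem \ref{int1}, so $I_2=\tfrac14+\tfrac{\pi}{4\alpha}$. For $I_1$ the plan is to rescale so that the argument of the first factor becomes $\pi u$: substituting $x=2u$ gives $\tanh(\pi x/2)=\tanh(\pi u)$, $\tanh(\alpha x)=\tanh(2\alpha u)$, $\sin(\alpha x^2)=\sin(4\alpha u^2)$ and $dx=2\,du$, hence
\[
I_1=2\int_0^\infty\frac{\sin(4\alpha u^2)}{\tanh(\pi u)\tanh(2\alpha u)}\,du.
\]
This is twice the integral of Theorem \ref{int2} with $\alpha$ replaced by $2\alpha$, so $I_1=2\bigl(\tfrac14+\tfrac{\pi}{8\alpha}\bigr)=\tfrac12+\tfrac{\pi}{4\alpha}$.

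Subtracting, the desired integral equals $I_1-I_2=\bigl(\tfrac12+\tfrac{\pi}{4\alpha}\bigr)-\bigl(\tfrac14+\tfrac{\pi}{4\alpha}\bigr)=\tfrac14$, as claimed; note that the $\pi/(4\alpha)$ terms cancel, leaving the $\alpha$-independent value $\tfrac14$. The argument uses no new analysis beyond the two theorems, the $\operatorname{csch}$ identity, and a linear change of variable, so I expect no genuine obstacle here — the only thing to get right is matching the rescaled $I_1$ to the precise form of Theorem \ref{int2} and confirming that the split pieces converge.
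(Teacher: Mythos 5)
Your proof is correct and is essentially the paper's own argument: your identity $\frac{1}{\sinh t}=\coth\frac{t}{2}-\coth t$ is exactly the paper's identity \eqref{elementary2} rewritten, and the paper's (very terse) proof likewise combines Theorems \ref{int1} and \ref{int2} after this splitting, with the rescaling $x\to 2u$ and the cancellation of the $\pi/(4\alpha)$ terms implicit. Your version simply makes explicit the convergence check and the substitution matching Theorem \ref{int2}, which the paper leaves to the reader.
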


\begin{proof}
Use the elementary identity
\begin{equation}\label{elementary2}
    \coth(x)-\coth(2x)=\frac{1}{\sinh(2x)}
\end{equation}
and theorems \ref{int1} and \ref{int2}.
\end{proof}

\section{Double Mordell integrals}\label{mordell}

\begin{Lemma} Let $f(x)$ be an eigenfunction of the cosine Fourier transform, and $\alpha\beta=\pi$ or $\alpha\beta=\frac{\pi}{2}$. Then
\[
\int\limits_0^\infty\int\limits_0^\infty\frac{\sin (x y) }{\tanh (\alpha  x) \tanh (\beta y)}\,f(x)f(y)\,dxdy=\frac{\pi^{3/2}}{4\sqrt{2}}\left(\alpha^{-1}+\beta^{-1}\right)\left(f(0)\right)^2.
\]
\end{Lemma}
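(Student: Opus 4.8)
The plan is to run a Plancherel (Parseval) argument for the double cosine Fourier transform, using Lemmas \ref{lemma1} and \ref{lemma2} to supply the transform of the kernel. Throughout I read ``eigenfunction'' in the self-reciprocal sense $f=\mathcal{F}_c f$, where $\mathcal{F}_c g(a)=\sqrt{2/\pi}\int_0^\infty g(x)\cos(ax)\,dx$; this is what fixes the sign of the answer to be positive. Write
\[
J=\int_0^\infty\int_0^\infty\frac{\sin(xy)}{\tanh(\alpha x)\tanh(\beta y)}\,f(x)f(y)\,dx\,dy .
\]
Substituting the self-reciprocity relation $f(x)=\sqrt{2/\pi}\int_0^\infty f(a)\cos(ax)\,da$ into both factors and interchanging the order of integration turns $J$ into
\[
J=\int_0^\infty\int_0^\infty f(a)f(b)\left[\frac{2}{\pi}\int_0^\infty\int_0^\infty\frac{\sin(xy)}{\tanh(\alpha x)\tanh(\beta y)}\cos(ax)\cos(by)\,dx\,dy\right]da\,db,
\]
so the whole problem reduces to knowing the double cosine transform of the kernel.

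First I would reduce that inner transform to Lemma \ref{lemma1} or \ref{lemma2} by the rescaling $x=u/\alpha$, $y=\pi v/\beta$, which replaces $\tanh(\alpha x)\tanh(\beta y)$ by $\tanh(u)\tanh(\pi v)$ and turns the phase into $\sin(\pi uv/(\alpha\beta))$. When $\alpha\beta=\pi$ this is $\sin(uv)$ and Lemma \ref{lemma1} applies; when $\alpha\beta=\pi/2$ it is $\sin(2uv)$ and Lemma \ref{lemma2} applies. Carrying the Jacobian and the rescaling of the arguments through (and using $\delta(ca)=|c|^{-1}\delta(a)$), I expect in both cases
\[
\frac{2}{\pi}\int_0^\infty\int_0^\infty\frac{\sin(xy)}{\tanh(\alpha x)\tanh(\beta y)}\cos(ax)\cos(by)\,dx\,dy=-\frac{\sin(ab)}{\tanh(\beta a)\tanh(\alpha b)}+c\bigl(\alpha\,\delta(a)+\beta\,\delta(b)\bigr),
\]
with $c=1$ when $\alpha\beta=\pi$ and $c=2$ when $\alpha\beta=\pi/2$; the extra factor of $2$ in the second case is exactly the Jacobian $\pi/(\alpha\beta)=2$.

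Substituting this into the bracketed expression, the regular term contributes $-\int_0^\infty\int_0^\infty\frac{\sin(ab)}{\tanh(\beta a)\tanh(\alpha b)}f(a)f(b)\,da\,db$, which equals $-J$: relabelling $a\leftrightarrow b$ and using that $\sin(ab)$ and $f(a)f(b)$ are symmetric turns its kernel back into $\frac{\sin(ab)}{\tanh(\alpha a)\tanh(\beta b)}$. The delta terms contribute $c(\alpha+\beta)\cdot\tfrac12 f(0)\int_0^\infty f(a)\,da$, where $\int_0^\infty\delta(a)f(a)\,da=\tfrac12 f(0)$ because the mass sits at the endpoint, and $\int_0^\infty f(a)\,da=\sqrt{\pi/2}\,f(0)$ by self-reciprocity evaluated at $a=0$. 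Hence $J=-J+\tfrac{c}{2}\sqrt{\pi/2}\,(\alpha+\beta)\,(f(0))^2$, i.e. $J=\tfrac{c}{4}\sqrt{\pi/2}\,(\alpha+\beta)(f(0))^2$. Finally I would use $\alpha+\beta=\alpha\beta(\alpha^{-1}+\beta^{-1})$ and note that $c\cdot\alpha\beta=\pi$ in both admissible cases, so the two collapse to the single formula $J=\tfrac{\pi}{4}\sqrt{\pi/2}\,(\alpha^{-1}+\beta^{-1})(f(0))^2=\tfrac{\pi^{3/2}}{4\sqrt2}(\alpha^{-1}+\beta^{-1})(f(0))^2$.

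The main obstacle I anticipate is justifying the distributional manipulations rather than the algebra: the kernel is not integrable, so the interchange of integrations and the appeal to Lemmas \ref{lemma1}, \ref{lemma2} must be read through the same $\varepsilon,\omega$ regularization used to prove those lemmas, and one must check that the delta terms survive the limit and are integrated against $f$ correctly (including the endpoint factor $\tfrac12$). The bookkeeping of the rescaling of the delta functions — confirming the coefficients are exactly $c\alpha$ and $c\beta$ — is the step most prone to error, and it is precisely the relation $c\cdot\alpha\beta=\pi$, together with $\alpha+\beta=\alpha\beta(\alpha^{-1}+\beta^{-1})$, that makes the two normalizations $\alpha\beta\in\{\pi,\pi/2\}$ produce the same closed form.
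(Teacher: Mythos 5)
Your proposal is correct and follows essentially the same route as the paper: a Plancherel-type argument that feeds the eigenfunction $f$ through the double cosine transform identities of Lemmas \ref{lemma1} and \ref{lemma2}, uses the endpoint value $\int_0^\infty\delta(a)f(a)\,da=\tfrac12 f(0)$ together with $\int_0^\infty f=\sqrt{\pi/2}\,f(0)$, and closes the loop with the symmetry that makes the regular term equal $-J$. The only difference is presentational — the paper keeps the kernel fixed and rescales the test functions $f(sa)f(tb)$ with $st=1$, while you rescale the integration variables in the kernel instead — and your delta-function bookkeeping (coefficients $c\alpha$, $c\beta$ with $c\,\alpha\beta=\pi$) reproduces the paper's constant exactly.
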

\begin{proof} The fact that $f(x)$ is an eigenfunction of the cosine Fourier transform means that
\[
\sqrt{\frac{2}{\pi}}\int\limits_0^\infty f(x)\cos(bx)\,dx=f(b).
\]
Multiplying \ref{integral1} by $f(sa)f(tb)$ and integrating with respect to $a$ and $b$ from $0$ to $\infty$ one obtains
\begin{align*}
    &\frac{1}{st}\int\limits_0^\infty\int\limits_0^\infty\frac{\sin (xy)}{\tanh ( x) \tanh ( \pi y)}f(x/s)f(y/t)\, dx dy\\&=-\int\limits_0^\infty\int\limits_0^\infty\frac{\sin (ab)}{\tanh (\pi a) \tanh (b)}f(sa)f(tb)\,dadb+\frac{1}{2}\sqrt{\frac{\pi}{2}}\,(t^{-1}+\pi s^{-1})\left(f(0)\right)^2.
\end{align*}
It is not hard to notice that when $st=1$ both double integrals are equal to each other. Redefining the parameters according to $s=\alpha$, $\pi t=\beta$, we complete the proof of the case with $\alpha\beta=\pi$.
The case $\alpha\beta=\frac{\pi}{2}$ is derived from lemma \ref{lemma2} in a similar manner.
\end{proof}

\begin{Th}\label{2dmordell} If $\alpha\beta=2\pi$ or $\alpha\beta=\pi$, then
\[
\int\limits_0^\infty\int\limits_0^\infty\frac{\sin (2  x y) }{\tanh (\alpha  x) \tanh (\beta y)}\,e^{-x^2-y^2}dxdy=\frac{\pi^{3/2}}{8}\left(\alpha^{-1}+\beta^{-1}\right).
\]
\end{Th}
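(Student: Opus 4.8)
The plan is to deduce the theorem directly from the preceding Lemma by specializing $f$ to the Gaussian eigenfunction and then rescaling the variables. First I would record that $f(x)=e^{-x^2/2}$ is an eigenfunction of the cosine Fourier transform in the normalization used in the Lemma: since $\int_0^\infty e^{-x^2/2}\cos(bx)\,dx=\sqrt{\pi/2}\,e^{-b^2/2}$, one has $\sqrt{2/\pi}\int_0^\infty f(x)\cos(bx)\,dx=f(b)$, so $f$ is an eigenfunction with eigenvalue $1$ and $f(0)=1$. Substituting this $f$ into the preceding Lemma gives, for $\alpha_0\beta_0=\pi$ or $\alpha_0\beta_0=\pi/2$,
\[
\int\limits_0^\infty\int\limits_0^\infty\frac{\sin(xy)}{\tanh(\alpha_0 x)\tanh(\beta_0 y)}\,e^{-(x^2+y^2)/2}\,dxdy=\frac{\pi^{3/2}}{4\sqrt2}\left(\alpha_0^{-1}+\beta_0^{-1}\right).
\]

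Next I would apply the change of variables $x\mapsto\sqrt2\,x$, $y\mapsto\sqrt2\,y$. This turns $\sin(xy)$ into $\sin(2xy)$, turns $e^{-(x^2+y^2)/2}$ into $e^{-(x^2+y^2)}$, produces a Jacobian factor $2$, and replaces the arguments of the hyperbolic tangents by $\sqrt2\,\alpha_0 x$ and $\sqrt2\,\beta_0 y$. Writing $\alpha=\sqrt2\,\alpha_0$ and $\beta=\sqrt2\,\beta_0$, the constraint $\alpha_0\beta_0=\pi$ becomes $\alpha\beta=2\pi$ and the constraint $\alpha_0\beta_0=\pi/2$ becomes $\alpha\beta=\pi$, which are exactly the two cases in the statement.

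Finally I would collect the constants. On the right-hand side $\alpha_0^{-1}+\beta_0^{-1}=\sqrt2\,(\alpha^{-1}+\beta^{-1})$, so after dividing through by the Jacobian factor $2$ the coefficient becomes $\frac{\pi^{3/2}}{4\sqrt2}\cdot\frac{\sqrt2}{2}\,(\alpha^{-1}+\beta^{-1})=\frac{\pi^{3/2}}{8}\,(\alpha^{-1}+\beta^{-1})$, which is the claimed value. I do not expect any genuine analytic obstacle at this stage: all of the delicate work—the regularization of the divergent double integral and the extraction of the Dirac delta terms—has already been absorbed into Lemmas \ref{lemma1}, \ref{lemma2} and the preceding Lemma. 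The only point requiring real care is the bookkeeping of the $\sqrt2$ scaling factors, ensuring that the two parameter conditions and the numerical prefactor transform consistently under the substitution.
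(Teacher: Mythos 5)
Your proposal is correct and follows exactly the paper's route: the paper likewise deduces the theorem from the preceding Lemma applied to the Gaussian eigenfunction $f(x)=e^{-x^2/2}$, merely leaving implicit the rescaling $x\mapsto\sqrt{2}\,x$, $y\mapsto\sqrt{2}\,y$ that you carry out explicitly. Your bookkeeping of the Jacobian, the parameter constraints, and the constant $\frac{\pi^{3/2}}{4\sqrt{2}}\cdot\frac{\sqrt{2}}{2}=\frac{\pi^{3/2}}{8}$ is all accurate.
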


\begin{proof} This is direct consequence of the previous theorem and the fact that $f(x)=e^{-x^2/2}$ is an eigenfunction of the cosine Fourier transform.
\end{proof}

\begin{Cor} If $\alpha\beta=\pi$, then
\[
\int\limits_0^\infty\int\limits_0^\infty\frac{\tanh (\alpha  x)}{\tanh (\beta y)}\,\sin (2x y) \,e^{-x^2-y^2}dxdy=\frac{\sqrt{\pi}}{8}{\alpha}.
\]
\end{Cor}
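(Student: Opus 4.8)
The plan is to reduce this corollary to Theorem \ref{2dmordell} by rewriting the factor $\tanh(\alpha x)$ via the elementary identity \ref{elementary}. Applying that identity with $x$ replaced by $\alpha x$ gives $\tanh(\alpha x)=2\coth(2\alpha x)-\coth(\alpha x)$, so that
\[
\frac{\tanh(\alpha x)}{\tanh(\beta y)}=\frac{2}{\tanh(2\alpha x)\tanh(\beta y)}-\frac{1}{\tanh(\alpha x)\tanh(\beta y)}.
\]
Substituting this into the integrand splits the left-hand side into two double integrals of exactly the shape handled by Theorem \ref{2dmordell}.

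Next I would apply that theorem to each piece, observing that the two admissible parameter products $2\pi$ and $\pi$ are precisely what occurs. In the first integral the hyperbolic arguments are $2\alpha x$ and $\beta y$, whose coefficient product equals $2\alpha\beta=2\pi$ under the hypothesis $\alpha\beta=\pi$; in the second integral the arguments are $\alpha x$ and $\beta y$ with product $\alpha\beta=\pi$. Both fall within the scope of Theorem \ref{2dmordell}, and combining the two evaluations yields
\[
2\cdot\frac{\pi^{3/2}}{8}\left(\frac{1}{2\alpha}+\frac{1}{\beta}\right)-\frac{\pi^{3/2}}{8}\left(\frac{1}{\alpha}+\frac{1}{\beta}\right)=\frac{\pi^{3/2}}{8}\cdot\frac{1}{\beta}.
\]
Using $\beta^{-1}=\alpha/\pi$, which follows from $\alpha\beta=\pi$, collapses the right-hand side to $\frac{\sqrt{\pi}}{8}\,\alpha$, the claimed value.

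The only point requiring a little care is the legitimacy of splitting the original convergent double integral into the two pieces above, since each piece must separately converge for the decomposition to hold. This is not a genuine obstacle: near $x=0$ each factor $\sin(2xy)/\tanh(c\alpha x)$ (with $c\in\{1,2\}$) stays bounded because $\sin(2xy)\sim 2xy$ cancels the simple pole of $1/\tanh(c\alpha x)$, and the same is true in the variable $y$; hence both split integrals inherit the integrability already present in Theorem \ref{2dmordell}. The computation is therefore entirely routine once \ref{elementary} is invoked, and the corollary follows immediately.
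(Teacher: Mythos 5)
Your proof is correct and is exactly the paper's argument: the paper's own proof of this corollary simply says to combine identity \ref{elementary} with Theorem \ref{2dmordell}, which is precisely the decomposition $\tanh(\alpha x)=2\coth(2\alpha x)-\coth(\alpha x)$ and the two applications (with parameter products $2\pi$ and $\pi$) that you carried out. Your arithmetic and the final simplification using $\beta^{-1}=\alpha/\pi$ are both correct, so nothing further is needed.
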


\begin{proof}
Use identity \ref{elementary} and the previous theorem.
\end{proof}

\begin{Cor} If $\alpha\beta=2\pi$, then
\[
\int\limits_0^\infty\int\limits_0^\infty\frac{\sin (2x y) }{\tanh (\alpha  x) \sinh (\beta y)}\,e^{-x^2-y^2}dxdy=\frac{\sqrt{\pi}}{16}{\alpha}.
\]
\end{Cor}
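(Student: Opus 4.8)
The plan is to follow the strategy of the two preceding corollaries: rewrite the weight $1/\sinh(\beta y)$ as a difference of two $\coth$ factors so that each resulting double integral falls under Theorem \ref{2dmordell}, and then subtract. First I would invoke the elementary identity \ref{elementary2}, applied with argument $\beta y/2$ in place of $x$, namely $\coth(\beta y/2)-\coth(\beta y)=1/\sinh(\beta y)$. Substituting this into the integrand splits the sought integral into
\[
\int_0^\infty\int_0^\infty\frac{\sin(2xy)}{\tanh(\alpha x)\tanh(\tfrac{\beta}{2}y)}\,e^{-x^2-y^2}\,dxdy-\int_0^\infty\int_0^\infty\frac{\sin(2xy)}{\tanh(\alpha x)\tanh(\beta y)}\,e^{-x^2-y^2}\,dxdy.
\]

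Before carrying out the splitting I would check that each piece is individually convergent near $y=0$. Although $\coth(\beta y/2)$ and $\coth(\beta y)$ each have a simple pole at the origin, the factor $\sin(2xy)=O(y)$ cancels it, so writing the integral as a difference of two convergent integrals is legitimate and no delicate cancellation is hidden.

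Next I would match each piece to a case of Theorem \ref{2dmordell}, whose hypothesis is that the product of the two hyperbolic coefficients equals $2\pi$ or $\pi$. In the first integral the coefficients are $\alpha$ and $\beta/2$, with product $\alpha\beta/2=\pi$ under the hypothesis $\alpha\beta=2\pi$, so the theorem yields $\tfrac{\pi^{3/2}}{8}(\alpha^{-1}+2\beta^{-1})$. In the second integral the coefficients are $\alpha$ and $\beta$ with product $\alpha\beta=2\pi$, giving $\tfrac{\pi^{3/2}}{8}(\alpha^{-1}+\beta^{-1})$. Subtracting, the $\alpha^{-1}$ terms cancel and I am left with $\tfrac{\pi^{3/2}}{8}\beta^{-1}$.

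Finally I would eliminate $\beta$ using $\beta=2\pi/\alpha$, so that $\tfrac{\pi^{3/2}}{8\beta}=\tfrac{\pi^{3/2}}{8}\cdot\tfrac{\alpha}{2\pi}=\tfrac{\sqrt{\pi}}{16}\,\alpha$, which is exactly the claimed value. I do not expect a genuine obstacle here; the only point requiring a moment's care is verifying that the hypothesis $\alpha\beta=2\pi$ pushes the shifted first integral into the $\alpha\beta=\pi$ branch of Theorem \ref{2dmordell} (rather than the $2\pi$ branch) and correctly tracking the resulting factor of $2$ in the coefficient of $\beta^{-1}$, since it is precisely the mismatch between $2\beta^{-1}$ and $\beta^{-1}$ that survives the subtraction.
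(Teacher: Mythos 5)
Your proposal is correct and takes essentially the same approach as the paper: the paper's proof is the one-line instruction to combine identity \ref{elementary2} with Theorem \ref{2dmordell}, and your splitting $1/\sinh(\beta y)=\coth(\beta y/2)-\coth(\beta y)$, with the first piece falling under the $\alpha\beta=\pi$ branch and the second under the $\alpha\beta=2\pi$ branch, is exactly the intended computation. Your bookkeeping of the surviving $\pi^{3/2}/(8\beta)$ term and the conversion $\beta=2\pi/\alpha$ matches the stated value $\tfrac{\sqrt{\pi}}{16}\alpha$.
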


\begin{proof} 
Use identity \ref{elementary2} and theorem \ref{2dmordell}.
\end{proof}

We note that \ref{closed_form} could be derived from the double Fourier transform of the function
\[
\tanh {(x)}\tanh{(\pi y)}\cos{(2xy)},
\]
though the proof in \cite{nicholson} used different Fourier transforms. There is also corresponding evaluation for double Mordell integral. 

Unfortunately, the theory developed in this paper becomes too cumbersome for more complicated functions. It would be interesting to find different proofs of the formulas in section \ref{type2}. Such proofs could be useful in finding more type II integrals that can be evaluated in closed form. 

\section{Reduction formula for a certain family of double Mordell integrals}\label{reduction}
Consider the integral
\[
\Psi(\alpha,\beta,\gamma)=\int\limits_0^\infty \int\limits_0^\infty\frac{\sin( \pi \gamma xy)}{\tanh (\pi x)\tanh(\pi y)}\,e^{-\pi
(\alpha x^2+\beta y^2)/2}\,dxdy.
\]
Is there a combination of parameters $\alpha$, $\beta$, $\gamma$ such that $\Psi(\alpha,\beta,\gamma)$ reduces to a sum of products of one-dimensional Mordell integrals $\psi_\alpha(\theta)$ defined in \ref{mordell_int}. The answer to this question will be given below. We start from proving a general three-parameter transformation formula for $\Psi(\alpha,\beta,\gamma)$. A similar formula has been proved for $\Phi(\alpha,\beta,\gamma)$ using somewhat different notation in \cite{nicholson}.
\begin{Lemma}\label{psi}
For $\alpha,\beta,\gamma>0$, we have
\begin{equation}\label{transformation}
    \Psi(\alpha,\beta,\gamma)=\frac{2}{\sqrt{\alpha\beta+\gamma^2}}\,\Psi\left(\frac{4\alpha}{\alpha\beta+\gamma^2},\frac{4\beta}{\alpha\beta+\gamma^2},\frac{4\gamma}{\alpha\beta+\gamma^2}\right).
\end{equation}
\end{Lemma}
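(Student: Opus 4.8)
The plan is to pass to an integral over the whole plane and recognise the transformation as a two-dimensional Gaussian Fourier transform, the only non-Gaussian ingredient being the self-reciprocity of $\coth$. Since the integrand $\frac{\sin(\pi\gamma xy)}{\tanh(\pi x)\tanh(\pi y)}e^{-\pi(\alpha x^2+\beta y^2)/2}$ is even in $x$ and in $y$ separately (and regular on the axes, where $\sin(\pi\gamma xy)$ cancels the poles of the two cotangents), I would first write $\Psi(\alpha,\beta,\gamma)=\tfrac14\operatorname{Im}\mathcal K(\alpha,\beta,\gamma)$, where $\mathcal K(a,b,c)=\int_{\mathbb R^2}\coth(\pi x)\coth(\pi y)\,e^{-\frac\pi2(ax^2+by^2)+i\pi cxy}\,dxdy$, understood as a principal value along the axes.

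The key analytic input is that $\coth(\pi\cdot)$, taken as a principal-value tempered distribution, is an eigenfunction of the Fourier transform: with $\hat f(t)=\int_{\mathbb R}f(x)e^{-2\pi ixt}dx$ one has $\widehat{\coth(\pi\cdot)}(t)=-i\coth(\pi t)$. I would establish this from the elementary sine transform $\int_0^\infty\coth(\pi x)\sin(2\pi tx)\,dx=\tfrac12\coth(\pi t)$, which follows by expanding $\frac{2}{e^{2\pi x}-1}=2\sum_{n\ge1}e^{-2\pi nx}$, integrating termwise, and summing the resulting partial-fraction series, the constant $1$ in $\coth$ contributing the Abel value $\int_0^\infty\sin(2\pi tx)\,dx=\tfrac1{2\pi t}$. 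Consequently the two-dimensional transform of $f(x,y)=\coth(\pi x)\coth(\pi y)$ is $(-i)^2\coth(\pi\xi_1)\coth(\pi\xi_2)=-\coth(\pi\xi_1)\coth(\pi\xi_2)$.

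Next I would apply the Parseval (multiplication) formula $\mathcal K(\alpha,\beta,\gamma)=\int_{\mathbb R^2}f\,G=\int_{\mathbb R^2}\hat f\,\check G$, where $G(x,y)=e^{-\pi (x,y)B(x,y)^{\mathsf T}}$ is the complex Gaussian with $B=\tfrac12\begin{pmatrix}\alpha&-i\gamma\\-i\gamma&\beta\end{pmatrix}$. Because the real part of $B$ is positive definite, $G$ is Schwartz and $\check G=\hat G$ (the quadratic form is even). The standard Gaussian transform gives $\hat G(\xi)=(\det B)^{-1/2}e^{-\pi\xi^{\mathsf T}B^{-1}\xi}$ with $\det B=\tfrac14(\alpha\beta+\gamma^2)$ — this is where the quantity $\alpha\beta+\gamma^2$ enters — and $B^{-1}=\tfrac{2}{\alpha\beta+\gamma^2}\begin{pmatrix}\beta&i\gamma\\i\gamma&\alpha\end{pmatrix}$. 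Combining with $\hat f=-\coth(\pi\xi_1)\coth(\pi\xi_2)$ and reading off the new parameters from the exponent yields $\mathcal K(\alpha,\beta,\gamma)=-\tfrac{2}{\sqrt{D}}\,\mathcal K\!\big(\tfrac{4\beta}{D},\tfrac{4\alpha}{D},-\tfrac{4\gamma}{D}\big)$ with $D=\alpha\beta+\gamma^2$. Finally I would take imaginary parts and use the two elementary symmetries $\mathcal K(a,b,-c)=\overline{\mathcal K(a,b,c)}$ and $\mathcal K(a,b,c)=\mathcal K(b,a,c)$: the sign from the conjugation cancels the minus sign above and the swap $a\leftrightarrow b$ restores the claimed ordering of parameters, giving $\Psi(\alpha,\beta,\gamma)=\tfrac14\operatorname{Im}\mathcal K(\alpha,\beta,\gamma)=\tfrac{2}{\sqrt{D}}\,\Psi(\tfrac{4\alpha}{D},\tfrac{4\beta}{D},\tfrac{4\gamma}{D})$.

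The main obstacle is rigorously justifying the eigenfunction relation and the Parseval step for $\coth$, which is neither integrable at infinity (it tends to $\pm1$) nor at the origin (a simple pole). The safe route is to treat $\coth(\pi x)=\operatorname{sgn}(x)+\big(\coth(\pi x)-\operatorname{sgn}(x)\big)$, so that the distributional transform is the sum of the principal-value transform of $\operatorname{sgn}$ and the ordinary transform of an $L^1$ remainder, with no Dirac mass appearing; this is precisely the phenomenon that forced the regularizations $\frac{\cosh((1-\varepsilon)\pi x)}{\sinh(\pi x)}$ in Lemmas \ref{lemma1} and \ref{lemma2}, and one may alternatively carry that same $\varepsilon$-regularization through the Gaussian manipulations and let $\varepsilon\to0$ at the end. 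One must also keep in mind that only $\operatorname{Im}\mathcal K$ is needed, since it is the imaginary part that corresponds to the absolutely convergent integral defining $\Psi$, the real part of $\mathcal K$ being singular on the axes.
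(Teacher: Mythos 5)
Your proposal is correct, but it takes a genuinely different route from the paper's own proof. The paper never leaves the realm of classical convergent integrals: it writes $\Psi(\alpha,\beta,\gamma)$ as an iterated integral, recognizes the inner integral as Ramanujan's function $F_\alpha(\theta)=\int_0^\infty\frac{\sin(\pi\theta x)}{\tanh(\pi x)}e^{-\pi\alpha x^2}dx$, and applies twice (once in each variable, the second time at the imaginary argument $\theta=2i\gamma x/\alpha$) the known transformation formula $F_\alpha(\theta)=\frac{-i}{\sqrt{\alpha}}\,e^{-\pi\theta^2/(4\alpha)}F_{1/\alpha}(i\theta/\alpha)$ (Entry 14.4.1 of \cite{berndt}); the factor $2/\sqrt{\alpha\beta+\gamma^2}$ accumulates from the two resulting prefactors. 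You instead compress both applications into a single two-dimensional Plancherel step: the principal-value distribution $\coth(\pi\cdot)$ is a Fourier eigenfunction with eigenvalue $-i$, and the complex Gaussian with matrix $B$ goes to the Gaussian with matrix $B^{-1}$ and the factor $(\det B)^{-1/2}=2/\sqrt{D}$. I checked your computations ($\det B=\tfrac14(\alpha\beta+\gamma^2)$, the form of $B^{-1}$, the sign bookkeeping through $\mathcal K(a,b,-c)=\overline{\mathcal K(a,b,c)}$ and $\mathcal K(a,b,c)=\mathcal K(b,a,c)$, and the identification $\operatorname{Im}\mathcal K=4\Psi$ via parity); they are sound. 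What your version buys: it is self-contained (the key input is derived from elementary series rather than cited), and it makes the structure transparent --- $\alpha\beta+\gamma^2$ appears as $4\det B$, and the parameter map is visibly an involution, which is why the identity is consistent. What the paper's version buys: no distribution theory whatsoever, hence none of the principal-value/delta bookkeeping that plagues the rest of the paper; the only point needing care is analytic continuation of $F_\alpha(\theta)$ to imaginary $\theta$.

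One small repair to your rigor sketch: $\coth(\pi x)-\operatorname{sgn}(x)$ is \emph{not} an $L^1$ remainder --- it retains the nonintegrable singularity $\frac{1}{\pi x}$ at the origin (it is $L^1$ only at infinity). To make that decomposition work you must split off the pole as well, e.g.\ via the partial fraction expansion $\coth(\pi x)=\frac{1}{\pi}\mathcal{P}\big(\frac1x\big)+\frac{2}{\pi}\sum_{n\ge 1}\frac{x}{x^2+n^2}$, each term of which has a classical transform free of delta masses. Alternatively, and more simply, the absence of any Dirac term in $\widehat{\coth(\pi\cdot)}$ is automatic: $\mathcal{P}\coth(\pi\cdot)$ is an odd distribution, so its Fourier transform is odd, while $\delta$ is even. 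With that adjustment your argument is complete.
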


\begin{proof}
Define according to formula $(14.4.1)$ in \cite{berndt}
\[
F_\alpha(\theta)=\int\limits_0^\infty\frac{\sin(\pi \theta x)}{\tanh(\pi x)}\,e^{-\pi \alpha x^2}dx.
\]
It is known that $F_\alpha(\theta)$ satisfies the transformation formula (\cite{berndt}, Entry 14.4.1)
\begin{equation*}
    F_\alpha(\theta)=\frac{-i}{\sqrt{\alpha}}\,e^{-\pi\theta^2/(4\alpha)}F_{1/{\alpha}}(i\theta/\alpha).
\end{equation*}
\begin{align}\label{transform}
\nonumber\Psi(\alpha,\beta,\gamma)&=\int\limits_0^\infty\frac{e^{-\pi\beta y^2/2}}{\tanh(\pi y)}\,F_{\alpha/2}(\gamma y)\,dy\\
&=-i\sqrt{\frac{2}{\alpha}}\int\limits_0^\infty\frac{e^{{-{\pi}\left(\beta+\gamma^2/{\alpha}\right)y^2/2}}}{\tanh(\pi y)}\,F_{2/\alpha}(2i\gamma y/\alpha)\,dy\nonumber\\
&=\sqrt{\frac{2}{\alpha}}\int\limits_{0}^\infty\int\limits_{0}^\infty\frac{e^{-2\pi x^2/\alpha-{\pi}\left(\beta+\gamma^2/{\alpha}\right)y^2/2}}{\tanh(\pi x)\,\tanh(\pi y)}\,\sinh\Big(\frac{2\pi \gamma xy}{\alpha}\Big)\,dxdy\\
&=-i\sqrt{\frac{2}{\alpha}}\int\limits_0^\infty\frac{e^{-2\pi x^2/\alpha}}{\tanh(\pi x)}\,F_{(\beta+\gamma^2/\alpha)/2}(2i\gamma x/\alpha)\,dx\nonumber\\
&=\sqrt{\frac{4}{\alpha\beta+\gamma^2}}\int\limits_0^\infty\frac{e^{-2\pi\beta x^2/(\alpha\beta+\gamma^2)}}{\tanh(\pi x)}\,F_{2/(\beta+\gamma^2/\alpha)}(4\gamma x/(\alpha\beta+\gamma^2)\,dx.\nonumber
\end{align}
One can easily see that this is equal to the right hand side of \ref{transformation}, as required.
\end{proof}

\begin{Th} \label{psi2}
Let $n\in\mathbb{N}$ and $\alpha>0$. Then
\begin{equation}\label{psi3}
    \frac{8}{\sqrt{n}}\Psi(2\alpha/n,2\alpha^{-1}/n,2/n)=\sqrt{{32n}}\,\Psi(\alpha^{-1}n, \alpha n,n)=\sqrt{\alpha}+\frac{1}{\sqrt{\alpha}}-\sqrt{\alpha}\sum_{k=1}^{n-1}\left\{\psi_{\alpha/n}\left(\tfrac{n-2k}{n}i\right)\right\}^2.
\end{equation}
\end{Th}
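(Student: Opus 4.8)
The plan is to prove the two equalities by different means. The first equality, relating the two values of $\Psi$, is a direct instance of the transformation formula of Lemma~\ref{psi}. Applying \eqref{transformation} to $\Psi(2\alpha/n,2\alpha^{-1}/n,2/n)$ I compute $\alpha\beta+\gamma^{2}=\tfrac{4}{n^{2}}+\tfrac{4}{n^{2}}=\tfrac{8}{n^{2}}$, so that the prefactor equals $2/\sqrt{8/n^{2}}=n/\sqrt2$ and the three transformed parameters become $(\alpha n,\alpha^{-1}n,n)$. Since the integrand defining $\Psi$ is invariant under the simultaneous exchange $x\leftrightarrow y$, $\alpha\leftrightarrow\beta$, we have $\Psi(\alpha n,\alpha^{-1}n,n)=\Psi(\alpha^{-1}n,\alpha n,n)$, whence
\[
\Psi(2\alpha/n,2\alpha^{-1}/n,2/n)=\tfrac{n}{\sqrt2}\,\Psi(\alpha^{-1}n,\alpha n,n).
\]
Multiplying by $8/\sqrt n$ and using $8n/(\sqrt n\,\sqrt2)=4\sqrt2\,\sqrt n=\sqrt{32n}$ yields the first equality.

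For the second equality I would first collapse the double integral to a single one exactly as in the proof of Lemma~\ref{psi}, obtaining
\[
\Psi(2\alpha/n,2\alpha^{-1}/n,2/n)=\int_{0}^{\infty}\coth(\pi y)\,e^{-\pi\alpha^{-1}y^{2}/n}\,F_{\alpha/n}(2y/n)\,dy.
\]
After the substitution $y\mapsto ny$ the decisive step is the cotangent multiplication formula
\[
\coth(\pi n y)=\frac1n\sum_{k=0}^{n-1}\coth\Big(\pi\big(y+\tfrac{ik}{n}\big)\Big),
\]
which replaces the single weight $\coth$ by a sum of $n$ cotangents shifted by $ik/n$ and thereby manufactures the anticipated finite sum over $k$. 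I expect the resonant term $k=0$ (the unshifted cotangent) to reproduce, via the eigenfunction lemma of Section~\ref{mordell}, the $n$-independent boundary contribution $\sqrt\alpha+1/\sqrt\alpha$, while each shifted term $k=1,\dots,n-1$ should collapse to a single Mordell integral evaluated at the argument $\tfrac{n-2k}{n}i$.

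To turn the $\coth$-weighted integrals into the $\sinh$-weighted integrals $\psi_{\alpha/n}$ I would use the identity $F_a(\theta)=\tfrac12\{\psi_a(\theta+i)+\psi_a(\theta-i)\}$, which follows from $\cosh(\pi x)\sin(\pi\theta x)=\tfrac12\{\sin(\pi(\theta+i)x)+\sin(\pi(\theta-i)x)\}$, together with the three-term functional equation
\[
\psi_a(\theta+2i)-\psi_a(\theta)=\frac{i}{\sqrt a}\,e^{-\pi(\theta+i)^{2}/(4a)},
\]
which I obtain from $\sin(\pi(\theta+2i)x)-\sin(\pi\theta x)=2i\cos(\pi(\theta+i)x)\sinh(\pi x)$ and a Gaussian integral. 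As a consistency check, for $n=1$ the sum is empty and the claim reduces to $\Psi(\alpha^{-1},\alpha,1)=(\sqrt\alpha+1/\sqrt\alpha)/(4\sqrt2)$; this follows at once from the eigenfunction lemma of Section~\ref{mordell} after rescaling $x,y$ so that the Gaussian becomes $e^{-(u^{2}+v^{2})/2}$ and the coupling becomes $\sin(uv)$. The symmetry $k\leftrightarrow n-k$, combined with the oddness $\psi_a(-\theta)=-\psi_a(\theta)$, shows that the two halves of the sum coincide, consistently with the square appearing in the statement.

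The hard part will be the rigorous extraction of the finite sum with the correct coefficient $-\sqrt\alpha$ and the correct boundary term. The shifted integrals $\int_0^\infty\coth(\pi(y+ik/n))(\cdots)\,dy$ must be interpreted with care, since translating the contour back to the real axis crosses the poles of $\coth$ on $i\mathbb{Z}$; as in Lemmas~\ref{lemma1} and \ref{lemma2}, I would regularize by $\coth(\pi y)\to\cosh((1-\varepsilon)\pi y)/\sinh(\pi y)$, separate the principal-value and Dirac terms by the Sokhotski--Plemelj relation \eqref{sp}, and only then pass to the limit $\varepsilon\to+0$. Controlling the convergence of each shifted integral, verifying that the resonant $k=0$ term indeed assembles to $\sqrt\alpha+1/\sqrt\alpha$ independently of $n$, and confirming that the coupling $\sin(\pi n xy)$ lets each summand factor as the square $\{\psi_{\alpha/n}(\tfrac{n-2k}{n}i)\}^{2}$ rather than a genuinely two-variable object, are the points that require the most care; they parallel the reduction of $\Phi(\alpha,\beta,\gamma)$ established in \cite{nicholson}.
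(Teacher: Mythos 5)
Your treatment of the first equality is correct and coincides with the paper's: apply Lemma \ref{psi} to $\Psi(2\alpha/n,2\alpha^{-1}/n,2/n)$, compute $\alpha\beta+\gamma^{2}=8/n^{2}$, use the $x\leftrightarrow y$ symmetry of the integrand, and check the arithmetic $8n/(\sqrt{n}\sqrt{2})=\sqrt{32n}$. Your auxiliary identities ($F_a(\theta)=\tfrac12\{\psi_a(\theta+i)+\psi_a(\theta-i)\}$, the three-term relation for $\psi_a$, the hyperbolic cotangent multiplication formula) are also all true.

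However, the second equality --- which is the actual content of the theorem --- is not proved in your proposal: the decisive steps are only announced (``I expect\dots'', ``should collapse\dots'', ``the hard part will be\dots''). Concretely, what is missing is any computation showing that the $k$-th shifted term yields $\{\psi_{\alpha/n}(\tfrac{n-2k}{n}i)\}^{2}$ with the coefficient $-\sqrt{\alpha}$, and that the $k=0$ term yields $\sqrt{\alpha}+1/\sqrt{\alpha}$. This is not a routine verification, and your route makes it structurally awkward: a square of a one-dimensional integral can only emerge from a two-dimensional integral that \emph{factors}, whereas your plan reduces everything to one-dimensional $y$-integrals against $F_{\alpha/n}$, with no visible mechanism for decoupling the two variables. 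In the paper this decoupling is precisely the crux: one keeps the double integral, extends it to $\mathbb{R}^{2}$ with a Sokhotski--Plemelj regularization (this is where $\sqrt{\alpha}$, $1/\sqrt{\alpha}$ and the constants accompanying each $\psi^{2}$ originate), completes the square, shifts $x\to x+ny/2$, expands by the elementary identity $\frac{\sinh(nu)}{\sinh u}\cosh u=\cosh(nu)+\sum_{k=1}^{n-1}e^{(n-2k)u}$ --- the real-axis analogue of your multiplication formula, requiring no contour shifts through poles --- and then rotates $\xi=x+ny/2$, $\eta=x-ny/2$, after which the Gaussian and each factor $e^{\pi(\xi-\eta)(n-2k)/n}$ separate, so each summand literally becomes a product of two one-dimensional integrals, namely $1+\{\psi_{\alpha}(\tfrac{n-2k}{n}i)\}^{2}$. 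By contrast, your cotangent multiplication formula moves the poles of $\coth$ off the real axis, so you would have to shift $n-1$ contours back across poles of the integrand (collecting residues and delta terms) and \emph{still} exhibit a factorization; none of this is carried out, and it is not clear it can be in the one-dimensional framing you chose. As it stands, the proposal establishes only the first equality of \ref{psi3}.
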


\begin{proof} The first equality follows from lemma \ref{psi}. Thus one has to established only the second equality. From \ref{transform}
\[
\Psi(\alpha^{-1}, n^2\alpha,n)=\sqrt{{2}{\alpha}}\int\limits_{0}^\infty\int\limits_{0}^\infty\frac{e^{-2\pi\alpha x^2-\pi\alpha n^2y^2}}{\tanh(\pi x)\,\tanh(\pi y)}\,\sinh (2\pi \alpha n xy)\,dxdy.
\]
We want to extend the integration over the whole $(x,y)$ plane, but to do this one has to regularize the integral, which will be done using Cauchy principal values as follows
\begin{align*}
    \Psi(\alpha^{-1}, n^2\alpha,n)&=\sqrt{\frac{\alpha}{8}}\cdot\mathcal{P}\int\limits_{-\infty}^\infty\int\limits_{-\infty}^\infty\frac{e^{-2\pi\alpha x^2-\pi\alpha n^2y^2}}{\tanh(\pi x)\,\tanh(\pi y)}\,e^{2\pi \alpha n xy}\,dxdy\\
    &=\sqrt{\frac{\alpha}{8}}\cdot\mathcal{P}\int\limits_{-\infty}^\infty\int\limits_{-\infty}^\infty\frac{e^{-2\pi\alpha (x- ny/2)^2-\pi\alpha n^2y^2/2}}{\tanh(\pi x)\,\tanh(\pi y)}\,dxdy.
\end{align*}
Further calculations require a change of variables. However, this form of the integral is not well suited for application of a change of variables, so we will give another regularization that follows from the above formula using  Sokhotski--Plemelj theorem \ref{sp}
\[
\Psi(\alpha^{-1}, n^2\alpha,n)=\sqrt{\frac{\alpha}{8}}\int\limits_{-\infty}^\infty\int\limits_{-\infty}^\infty e^{-2\pi\alpha (x- ny/2)^2-\pi\alpha n^2y^2/2}\left(\tfrac{1}{\tanh(\pi x+i\varepsilon)}+i\delta(x)\right)\left(\tfrac{1}{\tanh(\pi y+i\omega)}+i\delta(y)\right)\,dxdy,
\]
where $\varepsilon\to +0$ and $\omega\to +0$. Since 
\[
\int\limits_{-\infty}^\infty\int\limits_{-\infty}^\infty e^{-2\pi\alpha (x- ny/2)^2-\pi\alpha n^2y^2/2}\left(\tfrac{1}{\tanh(\pi y+i\omega)}+i\delta(y)\right)\delta(x)\,dxdy=0,
\]
and
\[
\int\limits_{-\infty}^\infty\int\limits_{-\infty}^\infty e^{-2\pi\alpha (x- ny/2)^2-\pi\alpha n^2y^2/2}\tfrac{1}{\tanh(\pi x+i\varepsilon)}\,i\delta(y)\,dxdy=1,
\]
one can simplify the above expression as
\[
\Psi(\alpha^{-1}, n^2\alpha,n)=\sqrt{\frac{\alpha}{8}}+\sqrt{\frac{\alpha}{8}}\int\limits_{-\infty}^\infty\int\limits_{-\infty}^\infty \frac{e^{-2\pi\alpha (x- ny/2)^2-\pi\alpha n^2y^2/2}}{\tanh(\pi x+i\varepsilon)\,\tanh(\pi y+i\omega)}\,dxdy.
\]
Now, after the change of variables $x\to x+ny/2$, one obtains
\begin{align*}
    \Psi(\alpha^{-1}, n^2\alpha,n)&=\sqrt{\frac{\alpha}{8}}+\sqrt{\frac{\alpha}{8}}\int\limits_{-\infty}^\infty\int\limits_{-\infty}^\infty \frac{e^{-2\pi\alpha x^2-\pi\alpha n^2y^2/2}}{\tanh(\pi x+\pi ny/2+i\varepsilon)\,\tanh(\pi y+i\omega)}\,dxdy\\
    &=\sqrt{\frac{\alpha}{8}}+\sqrt{\frac{\alpha}{32}}\int\limits_{-\infty}^\infty\int\limits_{-\infty}^\infty \frac{e^{-2\pi\alpha x^2-\pi\alpha n^2y^2/2}}{\tanh(\pi y+i\omega)}\left(\tfrac{1}{\tanh(\pi x+\pi ny/2+i\varepsilon)}-\tfrac{1}{\tanh(\pi x-\pi ny/2-i\varepsilon)}\right)\,dxdy\\
    &=\sqrt{\frac{\alpha}{8}}-\sqrt{\frac{\alpha}{32}}\int\limits_{-\infty}^\infty\int\limits_{-\infty}^\infty\frac{\sinh(\pi ny+2i\varepsilon)}{\sinh(\pi y+i\omega)}\frac{e^{-2\pi\alpha x^2-\pi\alpha n^2y^2/2}\cosh(\pi y+i\omega)\,dxdy}{\sinh(\pi x+\pi ny/2+i\varepsilon)\sinh(\pi x-\pi ny/2-i\varepsilon)}.
\end{align*}
If $2\varepsilon=\omega n$, then one can write
\begin{align*}
    \frac{\sinh(\pi ny+2i\varepsilon)}{\sinh(\pi y+i\omega)}\cosh(\pi y+i\omega)&=\cosh(\pi y+i\omega)\sum_{k=0}^{n-1}e^{(n-1-2k)(\pi y+i\omega)}\\
    &=\cosh(\pi ny+in\omega)+\sum_{k=1}^{n-1}e^{(n-2k)(\pi y+i\omega)}.
\end{align*}
Putting $\omega=0$ in the numerator and after the change of variables $\xi=x+n y/2,~\eta=x-ny/2$, one obtains
\begin{align*}
    \Psi(\alpha^{-1}, n^2\alpha,n)&=\sqrt{\frac{\alpha}{8}}-\sqrt{\frac{\alpha}{32n^2}}\int\limits_{-\infty}^\infty\int\limits_{-\infty}^\infty\Big(\cosh\big(\pi(\xi-\eta)\big)+\sum_{k=1}^{n-1}e^{\pi (\xi-\eta)\frac{n-2k}{n}}\Big)\frac{e^{-\pi\alpha( \xi^2+\eta^2)}\,d\xi d\eta}{\sinh(\pi\xi+i\varepsilon)\sinh(\pi \eta-i\varepsilon)}.
\end{align*}
After simplifying the integrals using the relations
\[
\int\limits_{-\infty}^\infty\int\limits_{-\infty}^\infty\frac{\cosh\left(\pi(\xi-\eta)\right)}{\sinh(\pi\xi+i\varepsilon)\sinh(\pi \eta-i\varepsilon)}\,e^{-\pi\alpha( \xi^2+\eta^2)}\,d\xi d\eta=\int\limits_{-\infty}^\infty\int\limits_{-\infty}^\infty\left\{\delta(\xi)\delta(\eta)-1\right\} e^{-\pi\alpha( \xi^2+\eta^2)}\,d\xi d\eta=1-\alpha^{-1},
\]
\[
\int\limits_{-\infty}^\infty\int\limits_{-\infty}^\infty \frac{e^{\pi (\xi-\eta)\frac{n-2k}{n}}}{\sinh(\pi\xi+i\varepsilon)\sinh(\pi \eta-i\varepsilon)}\,e^{-\pi\alpha( \xi^2+\eta^2)}\,d\xi d\eta=1+\left\{\psi_{\alpha}\left(\tfrac{n-2k}{n}i\right)\right\}^2,
\]
we finally come to
\[
\Psi(\alpha^{-1}, n^2\alpha,n)=\sqrt{\frac{\alpha}{32}}\left(1+\frac{1}{\alpha n}-\frac{1}{n}\sum_{k=1}^{n-1}\left\{\psi_{\alpha}\left(\tfrac{n-2k}{n}i\right)\right\}^2\right),
\]
which is equivalent to the second equality in \ref{psi3}, as required.
\end{proof} 

The corollary below is an immediate consequence of Theorem \ref{psi2}.
\begin{Cor}\label{quadratic}
If $\alpha\beta=1$, and $n\in\mathbb{N}$, then
\[
\sqrt{\alpha}\sum_{k=1}^{n-1}\left\{\psi_{\alpha/n}\left(\tfrac{n-2k}{n}i\right)\right\}^2=\sqrt{\beta}\sum_{k=1}^{n-1}\left\{\psi_{\beta/n}\left(\tfrac{n-2k}{n}i\right)\right\}^2.
\]
\end{Cor}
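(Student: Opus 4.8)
The plan is to exploit two facts: the manifest symmetry of $\Psi(\alpha,\beta,\gamma)$ under interchange of its first two arguments, together with the invariance of the constant term appearing in Theorem \ref{psi2} under the involution $\alpha\mapsto\beta=\alpha^{-1}$. Since the corollary is advertised as an immediate consequence of Theorem \ref{psi2}, no new integral estimates are needed; the entire argument is formal.

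First I would record that swapping the integration variables $x\leftrightarrow y$ in the definition of $\Psi$ gives $\Psi(\alpha,\beta,\gamma)=\Psi(\beta,\alpha,\gamma)$ for all positive parameters, because the kernel $\sin(\pi\gamma xy)/(\tanh(\pi x)\tanh(\pi y))$ and the Gaussian weight are symmetric once $\alpha$ and $\beta$ are exchanged along with $x$ and $y$. Applying this to the specific arguments in Theorem \ref{psi2} yields $\Psi(\alpha^{-1}n,\alpha n,n)=\Psi(\alpha n,\alpha^{-1}n,n)$.

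Next I would write the second equality of Theorem \ref{psi2} twice: once as stated for $\alpha$, and once with $\alpha$ replaced by $\beta=\alpha^{-1}$. Under this replacement the first two arguments become $(\beta^{-1}n,\beta n)=(\alpha n,\alpha^{-1}n)$, so by the symmetry from the previous step both instances share the \emph{same} left-hand side $\sqrt{32n}\,\Psi(\alpha^{-1}n,\alpha n,n)$. Equating the two right-hand sides therefore gives
\[
\sqrt{\alpha}+\tfrac{1}{\sqrt{\alpha}}-\sqrt{\alpha}\sum_{k=1}^{n-1}\bigl\{\psi_{\alpha/n}(\tfrac{n-2k}{n}i)\bigr\}^2=\sqrt{\beta}+\tfrac{1}{\sqrt{\beta}}-\sqrt{\beta}\sum_{k=1}^{n-1}\bigl\{\psi_{\beta/n}(\tfrac{n-2k}{n}i)\bigr\}^2.
\]

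Finally, the cancellation of the constant terms is the crux. Because $\alpha\beta=1$ we have $1/\sqrt{\alpha}=\sqrt{\beta}$ and $1/\sqrt{\beta}=\sqrt{\alpha}$, so both constants equal $\sqrt{\alpha}+\sqrt{\beta}$ and drop out, leaving precisely the claimed identity after transposition. There is essentially no obstacle: the only point demanding a moment's care is checking that the substitution $\alpha\mapsto\beta$ interacts correctly with the symmetry of $\Psi$, so that the two left-hand sides genuinely coincide rather than differing by a reshuffling of arguments; once that is verified, the corollary drops out by inspection.
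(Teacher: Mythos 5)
Your proof is correct and is precisely the argument the paper intends when it calls the corollary an ``immediate consequence'' of Theorem \ref{psi2}: the symmetry $\Psi(\alpha,\beta,\gamma)=\Psi(\beta,\alpha,\gamma)$ makes the middle expression in \ref{psi3} invariant under $\alpha\mapsto\alpha^{-1}$, the constant $\sqrt{\alpha}+1/\sqrt{\alpha}$ is likewise invariant, and the sum terms must therefore agree.
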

\noindent We note that in principle this identity can be derived from linear relations between one-dimensional Mordell integrals. This is because $\psi_{\beta/n}\left(\tfrac{n-2k}{n}i\right)$ for any $1\le k\le n-1$ can be written as a linear sum of $n-1$ integrals $\psi_{\alpha/n}\left(\tfrac{n-2k}{n}i\right)$ (for analogous formulas see \cite{cais}). The transformation matrix from one bases to another is orthogonal, thus preserves the diagonal quadratic form.

As an illustration of \ref{psi3}, consider some simple cases. When $n=1$ or $n=2$, then \ref{psi3} gives Theorem \ref{2dmordell}. Thus Theorem \ref{psi2} is a generalization of Theorem \ref{2dmordell}. When $n=3$ or $n=4$, one gets
\[
4\sqrt{6}\int\limits_0^\infty \int\limits_0^\infty\frac{\sin(3\pi xy)}{\tanh (\pi x)\tanh(\pi y)}\,e^{-3\pi
(\alpha x^2+\alpha^{-1}y^2)/2}\,dxdy=\sqrt{\alpha}+\frac{1}{\sqrt{\alpha}}+18\sqrt{\alpha}\left\{\int\limits_0^\infty \frac{e^{-3\pi\alpha x^2}}{2\cosh({2\pi x})+1}\, dx\right\}^2,
\]
\[
8\sqrt{2}\int\limits_0^\infty \int\limits_0^\infty\frac{\sin(4\pi xy)}{\tanh (\pi x)\tanh(\pi y)}\,e^{-2\pi(\alpha x^2+\alpha^{-1}y^2)}\,dxdy=\sqrt{\alpha}+\frac{1}{\sqrt{\alpha}}+2\sqrt{\alpha}\left\{\int\limits_0^\infty \frac{e^{-\pi\alpha x^2}}{\cosh({\pi x})}\, dx\right\}^2.
\]

\section*{Appendix: Proof of \ref{series}}

Ramanujan gave a proof of \ref{series} using Laplace transform \cite{ramanujan2}. Proof using contour integration can be found in \cite{mordell}. The proof below is based on Poisson summation formula.  

\begin{proof}
Using the partial fractions expansion
\[
\frac{1}{\cosh(\pi x)}=\frac{1}{\pi}\sum_{k=0}^\infty\frac{(-1)^k(2k+1)}{x^2+\left(k+\frac{1}{2}\right)^2},
\]
and integrating termwise one obtains
\[
I(\alpha)=\frac{1}{2\pi}\sum_{k=0}^\infty(-1)^k(2k+1)\,I_k(\alpha),
\]
where 
\[
I_k(\alpha)=\int\limits_{-\infty}^\infty\frac{e^{i\alpha x^2+ibx}}{x^2+\left(k+\frac{1}{2}\right)^2}\, dx.
\]
Integrals of this form can be reduced to error function. $I_k(\alpha)$ satisfies the following differential equation
\[
I_k'(\alpha)+i\left(k+\frac{1}{2}\right)^2I_k(\alpha)=i\int\limits_{-\infty}^\infty e^{i\alpha x^2+ibx}\,dx=\sqrt{\frac{\pi}{\alpha}}\,e^{\frac{3i\pi }{4}-\frac{ib^2}{4\alpha}},
\]
with the initial condition $I_k(0)=\frac{2\pi}{2k+1}\,e^{-b\left(k+\frac{1}{2}\right)}$. One can check by direct calculation (using integration by parts) that the solution is given by 
\[
I_k(\alpha)=\frac{2\pi}{2k+1}\,e^{-b\left(k+\frac{1}{2}\right)-i\alpha\left(k+\frac{1}{2}\right)^2}+\frac{4\sqrt{\pi}}{2k+1}\,e^{\frac{i\pi }{4}-\frac{ib^2}{4\alpha}}\int\limits_0^\infty e^{iy^2-\frac{yb}{\sqrt{\alpha}}}\sin\left(y\sqrt{\alpha}(2k+1)\right)\, dy.
\]
Thus
\begin{align*}
    I(\alpha)&=\sum_{k=0}^\infty(-1)^ke^{-b\left(k+\frac12\right)-i\alpha\left(k+\frac12\right)^2}\\
    &+2e^{\frac{i\pi }{4}-\frac{ib^2}{4\alpha}}\sqrt{\frac{\pi}{\alpha}}\sum_{k=0}^\infty(-1)^k\int\limits_0^\infty e^{\frac{i\pi^2}{\alpha}y^2-\frac{\pi b}{\alpha}y}\sin\left(\pi(2k+1)y\right)\, dy.
\end{align*}
Next, we apply Poisson summation formula in the form \cite{titchmarsh}
\[
\sum_{k=0}^\infty(-1)^k\int\limits_0^\infty f(y)\sin\left(\pi(2k+1)y\right)\, dy=\frac{1}{2}\sum_{k=-\infty}^\infty(-1)^k f\left(k+\tfrac{1}{2}\right).
\]
This completes the proof.
\end{proof}

\end{document}